\newcommand{\ZZ}{\mathbb{Z}}
\newcommand{\NN}{\mathbb{N}}
\newcommand{\CC}{\mathbb{C}}
\newcommand{\RR}{\mathbb{R}}
\newcommand{\Hi}{\mathscr{H}}
\newcommand{\dprod}[2]{\left\langle #1,#2\right\rangle}
\newcommand{\norm}[1]{\left\lVert #1\right\rVert}
\newcommand{\proj}[1]{\left|#1\right\rangle\left\langle#1\right|}
\newcommand{\matx}[1]{\left(\begin{matrix} #1 \end{matrix}\right)}
\newtheorem{theorem}{Theorem}[section]
\newtheorem{lemma}[theorem]{Lemma}
\numberwithin{equation}{section}
\newtheorem{corollary}[theorem]{Corollary}
\title{Multiplicity bound of Singular Spectrum for higher rank Anderson models}
\author{Anish Mallick\footnote{e-mail: \texttt{anishm@imsc.res.in} }\\The Institute of Mathematical Sciences, Chennai, India.}
\date{\today}
\begin{document}
\maketitle
\begin{abstract}
In this work, we prove a bound on multiplicity of the singular spectrum for certain class of Anderson Hamiltonians. 
The operator in consideration is of the form $H^\omega=\Delta+\sum_{n\in\ZZ^d}\omega_n P_n$ on the Hilbert space $\ell^2(\ZZ^d)$, where $\Delta$ is discrete laplacian, $P_n$ are projection onto $\ell^2(\{x\in\ZZ^d:n_il_i<x_i\leq (n_i+1)l_i\})$ for some $l_1,\cdots,l_d\in\NN$ and $\{\omega_n\}_n$ are i.i.d real bounded random variables following absolutely continuous distribution. 
We prove that the multiplicity of singular spectrum is bounded above by $2^d-d$ independent of $\{l_i\}_{i=1}^d$. 
When $l_i+1\not\in 2\NN\cup3\NN$ for all $i$ and $gcd(l_i+1,l_j+1)=1$ for $i\neq j$, we also prove that the singular spectrum is simple.
\end{abstract}
{\bf AMS 2010 Classification:} 81Q10, 47B39, 46N50, 82D30.
\section{Introduction}

Random Schr\"{o}dinger operators and their tight binding version, Anderson model, are well studied for their Spectral properties. Localization and spectral statistics are widely worked on areas in this subject.
It was recently found by Hislop-Krishna\cite{HK1} that the spectral multiplicity plays a role in determining the spectral statistics in the localized Anderson and random Schr\"{o}dinger models.

It becomes therefore important to know the multiplicity of the spectrum, mainly in the random Schr\"{o}dinger case. 
In the Anderson tight binding model, which is the rank one case, Barry Simon\cite{BS2} showed that any standard basis vector $\delta_n$ is cyclic in region of pure point spectrum. 
Other works in pure point regime are by Klein-Molchanov \cite{KM2} and Aizenman-Warzel \cite{AW3}.
Jak\v{s}i\'{c}-Last in \cite{JL1,JL2} showed that the singular spectrum is almost surely simple in case of Anderson type Hamiltonians where rank of the perturbation is one. 

But for higher rank case, spectral simplicity is not always true.
In \cite{SSH} Sadel and Schulz-Baldes worked with certain family of random Dirac operators and showed non-trivial multiplicity of spectrum depending on certain parameter defining the model.
Though Naboko-Nichols-Stolz \cite{NNS} showed simplicity of point spectrum for the operator \eqref{MainOperator} given below, in some particular cases.

 We consider the higher rank Anderson model as first step for studying the random Schr\"{o}dinger case and look at a conjecture of Naboko-Nichols-Stolz (the conjecture is implicit in their paper \cite{NNS}). 
Here we address the question of spectral multiplicity of singular spectrum for a class of higher rank Anderson models.

The Hamiltonian we will work on is like the Anderson tight binding model, except that the perturbations are equal over boxes. The operator can be described as follows.
On the Hilbert space $\ell^2(\ZZ^d)$ the family of operators in consideration are given by
\begin{equation}\label{MainOperator}
 H^\omega=\Delta+\sum_{n\in\ZZ^d} \omega_n P_n,
\end{equation}
\begin{minipage}{0.6\textwidth}
where $\Delta$ and $\{P_n\}_{n\in\ZZ^d}$ are
$$ (\Delta u)(x)=\sum_{i=1}^d u(x+e_i)+u(x-e_i),$$
and
$$(P_n u)(x)=\left\{\begin{matrix} u(x) & n_il_i<x_i\leq (n_i+1)l_i~\forall i \\ 0 & otherwise\end{matrix}\right.,$$
where $\{e_i\}_{i=1}^d$ is the standard generator of the group $\ZZ^d$ and $l_1,\cdots,l_d\in\NN$. The sequence $\{\omega_n\}_{n\in\ZZ^d}$ are i.i.d real bounded random variables following absolutely continuous distribution $\mu$. 
The figure gives a representation for $d=2$.
\end{minipage}
\begin{minipage}{0.35\textwidth}
\centering
\includegraphics[width=2.4in,keepaspectratio]{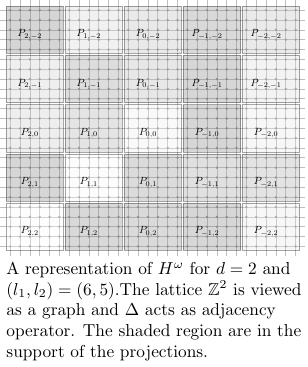}
\end{minipage}

We will view the random variables $\{\omega_n\}_{n\in\ZZ^d}$ as random variables over the probability space $(\Omega,\mathcal{B},\mathbb{P})$ (we can work with $(\RR^{\ZZ^d},\otimes_{\ZZ^d}\mathscr{B}(\RR),\otimes_{\ZZ^d}\mu)$ defined through Kolmogorov construction). So $\omega\mapsto H^\omega$ is a self adjoint operator valued random variable.
The main theorem of this manuscript is:
\begin{theorem}\label{mainthm}
Let $H^\omega$ be described by \eqref{MainOperator} where $l_1,\cdots,l_d\in\NN\setminus\{1\}$, then almost surely
\begin{enumerate}
\item For $d=2$ if $gcd(l_1+1,l_2+1)=1$ then the singular spectrum is simple.
\item For $d>2$ if $l_1,\cdots,l_d\in\NN$ be such that $l_i+1\not\in 2\NN\cup 3\NN$ for all $i$ and $gcd(l_i+1,l_j+1)=1$ for all $i\neq j$, then the singular spectrum is simple.
\item For generic $l_1,\cdots,l_d\in\NN\setminus\{1\}$, the maximum multiplicity of singular spectrum is at most $2^d-d$.
\end{enumerate}
\end{theorem}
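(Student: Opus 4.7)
The natural starting point is this: although $H^\omega$ depends on infinitely many random parameters, freezing all $\omega_m$ with $m\neq n$ makes $H^\omega$ a rank-$L$ perturbation of a fixed selfadjoint operator in the single scalar $\omega_n$, where $L=\prod_i l_i$. The quantitative Aronszajn--Krein / Jak\v{s}i\'{c}--Last spectral averaging theory for such rank-$L$ perturbations bounds the local multiplicity of the singular part of the spectral measure of $H^\omega$ at an energy $E$, almost surely in $\omega_n$, by
\[
    \operatorname{rank}\,\operatorname{Im}\bigl[P_n(H^\omega-E-i0)^{-1}P_n\bigr].
\]
Since the bound is available for every $n$, the plan is to show this rank is at most $2^d-d$ in the generic case (3), and equal to $1$ under the coprimality hypotheses of (1) and (2). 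Applying Fubini in the remaining $\{\omega_m\}_{m\neq n}$ then upgrades this to the almost-sure statement.

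To analyse the rank, I would use the Schur/Feshbach reduction
\[
    P_n(H^\omega-z)^{-1}P_n=\bigl[\Delta_n^{D}+\omega_n-z-B_n(z)\bigr]^{-1},
\]
where $\Delta_n^{D}$ is the Dirichlet Laplacian on the box of side-lengths $l_1,\dots,l_d$ and $B_n(z)$ is the self-energy encoding the coupling to the rest of $\ZZ^d$ across the one-site-thick faces of the box. Diagonalising $\Delta_n^{D}$ in the product sine (Chebyshev) basis with eigenvalues $\lambda_{\vec k}=2\sum_i\cos\bigl(k_i\pi/(l_i+1)\bigr)$, $k_i\in\{1,\dots,l_i\}$, one expects the rank of the Schur-reduced resolvent at a singular-spectrum energy to match the multiplicity of the eigenvalue $E-\omega_n$ of $\Delta_n^{D}$: singular behaviour at $E$ should force resonance with a specific interior mode, and only other modes sharing the same $\lambda_{\vec k}$ can then contribute to the imaginary part.

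The remaining step is then purely number-theoretic: bound the multiplicity of the map $\vec k\mapsto\lambda_{\vec k}$. This is controlled by $\mathbb{Q}$-linear relations among cosines of rational multiples of $\pi$, whose structure is classified by Conway--Jones. When the $l_i+1$ are pairwise coprime and none is divisible by $2$ or $3$, the classification forces the only equality to be $\vec k=\vec k{}'$, yielding simplicity in parts (1) and (2). For the generic bound $2^d-d$ in (3), one has to account for coincidences created by the exceptional small-prime relations across the $d$ coordinates; an orbit analysis under the $\ZZ_2^d$ sign-reflection group $k_i\leftrightarrow l_i+1-k_i$ on the box, together with a case distinction on how many coordinates realise an exceptional cyclotomic relation, should yield the uniform cap $2^d-d$.

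The main obstacle I foresee is the middle step: passing from the analytic quantity $\operatorname{rank}\operatorname{Im}B_n(E+i0)$ (which a priori is as large as the boundary of the box, of order $L^{1-1/d}\gg 2^d-d$) to the combinatorial multiplicity of $\lambda_{\vec k}$. Isolating the eigenspace contribution in the singular regime requires delicate control of how the exterior Green's function resonates with the interior Chebyshev modes, and robustly for almost every $\omega$; doing so without losing a factor growing in $L$ looks to me like the crux of the proof, after which the number-theoretic finale should be comparatively clean.
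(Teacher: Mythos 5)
Your overall architecture (reduce to the boundary values of the matrix Green function $P_n(H^\omega-z)^{-1}P_n$, Schur-reduce to $[\Delta_n^{D}+\omega_n-z-B_n(z)]^{-1}$, then do number theory on $\sum_i\cos(\pi k_i/(l_i+1))$) is the same as the paper's, but the step you correctly flag as the crux is exactly the step you have not supplied, and it is where the paper's actual content lies. Two remarks on the reduction first. The criterion you quote, $\operatorname{rank}\operatorname{Im}[P_n(H^\omega-E-i0)^{-1}P_n]$, is the multiplicity criterion for the \emph{absolutely continuous} part; for the singular part the paper uses Poltoratskii's theorem to show that the relevant object is the normalized limit $M_n^\omega(x)$, whose range is contained in $\ker(I+\lambda P_n(H^\omega-x-i0)^{-1}P_n)$, so the correct bound is the maximum \emph{eigenvalue multiplicity} of the real, self-adjoint limiting Green matrix (Lemma \ref{multLem1}, which also needs the cyclicity statement \eqref{condEq2} proved in Lemma \ref{lemInvertCond} and the subspace identity of Theorem \ref{singSubSpThm}). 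More importantly, the paper never controls $B_n(E+i0)$ at real energies inside the spectrum, which is the ``delicate control'' you say you would need. Instead, Lemma \ref{multLem2} shows, via the characteristic polynomial of $G^\lambda_{0,0}(z)$ and analyticity of its coefficients in $z$ (plus polynomiality in the coupling $\lambda$), that the maximum eigenvalue multiplicity is \emph{constant} on all of $(\CC\setminus\sigma(H^\omega))\times\RR$. One may therefore evaluate at a real $z=r$ far outside the spectrum, where the Neumann series converges and $B_n(r)$ is an explicit small perturbation of boundary projections, and also insert large artificial couplings $\lambda_i P_{e_i}$ on the neighbouring boxes. Without this (or an equivalent) rigidity statement, your plan stalls precisely at the obstacle you name.

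There is a second gap in the combinatorial finale for part (3). Bounding the multiplicity of the map $\vec k\mapsto\lambda_{\vec k}$ for the bare Dirichlet Laplacian cannot give $2^d-d$ for generic $l_i$: if, say, all $l_i$ are equal, permutation of coordinates already produces degeneracies of order $d!$, which exceeds $2^d-d$ for $d\ge 4$. The paper's bound does not come from the Dirichlet spectrum alone but from the perturbed matrix $r^2P_0\Delta P_0+rP_0\Delta(I-P_0)\Delta P_0+\sum(\omega_n+\lambda_n)Q_n$: the order-$r$ and order-$1$ face terms, with the $\lambda_i$ chosen in geometrically growing windows (Lemma \ref{lem4}) so that the weighted sums $\sum_i a_i\sin^2(\pi n_i/(l_i+1))/(l_i+1)$ are pairwise well separated, split every degeneracy except within a reflection orbit $m_i\in\{n_i,l_i+1-n_i\}$ of size at most $2^d$, and the $d$ single-flip elements are then removed at order $r^2$. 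Your appeal to Conway--Jones is a legitimate alternative to the paper's cyclotomic-degree argument (Lemma \ref{lem5}) for establishing $\sum_{i\in S}\cos(\pi n_i/p_i)\neq 0$ in parts (1) and (2), but it does not substitute for the eigenvalue-splitting mechanism needed in part (3).
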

The case $d=1$ is not provided here because it follows through properties of Jacobi operator. Note that the bound on multiplicity depends only on $d$.
Following the steps of the proof of the theorem, part (3) of the theorem can be improved to:
\begin{corollary}\label{corMainThm}
Let $d>1$, $l_1,\cdots,l_s\in\NN\setminus\{1\}$ and $l_{s+1}=\cdots=l_d=1$ be given. Then for the operator $H^\omega$ defined by \eqref{MainOperator} we have
\begin{enumerate}
\item For $s=2$ if $gcd(l_1+1,l_2+1)=1$ then the singular spectrum is simple.
\item For $s>2$ if $l_1,\cdots,l_s\in\NN$ be such that $l_i+1\not\in 2\NN\cup 3\NN$ for all $i$ and $gcd(l_i+1,l_j+1)=1$ for all $i\neq j$, then the singular spectrum is simple.
\item For generic $l_1,\cdots,l_s\in\NN\setminus\{1\}$, the maximum multiplicity of singular spectrum is at most $2^s-s$.
\end{enumerate}
\end{corollary}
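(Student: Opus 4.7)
The plan is to rerun the argument of Theorem \ref{mainthm} while tracking how each coordinate direction contributes to the multiplicity bound. The guiding principle is that a direction in which $l_i=1$ acts as a genuine rank-one Anderson-type perturbation in that coordinate and, by the Jak\v{s}i\'{c}--Last theorem, cannot enlarge the multiplicity of the singular spectrum; so effectively only the $s$ non-trivial directions should feed into the combinatorial count.

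More concretely, setting $l_{s+1}=\cdots=l_d=1$ collapses the support of each $P_n$ from a box to the slab $\prod_{i\leq s}(n_il_i,(n_i+1)l_i]\times\prod_{i>s}\{n_i+1\}$, of rank $l_1\cdots l_s$. I would then repeat the cyclicity and trace-class analysis behind Theorem \ref{mainthm} on this slab. The enumeration of "corner" contributions that produced the $2^d$ in the full-dimensional case is parametrised by choosing one of two extremes in each direction; when $l_i=1$ there is only a single extreme in direction $i$, so the $2^d$ choices collapse to $2^s$, and the accompanying $d$ linear relations between them reduce to $s$, giving the bound $2^s-s$ in part (3). The number-theoretic hypotheses in parts (1) and (2) are already imposed only on the non-trivial indices $i\leq s$, so the simplicity argument of Theorem \ref{mainthm} should transfer verbatim once the degenerate directions have been discarded.

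The main obstacle is to justify rigorously that the rank-one directions decouple from the multiplicity analysis. Concretely, one must show that any candidate cyclic vector that could push the multiplicity beyond $2^s-s$ may be chosen inside a single transverse fibre, and that the trace-class identities relating neighbouring slabs via $\Delta$ remain valid when the box degenerates in directions $i>s$. Because $\Delta$ still couples fibres non-trivially in the degenerate directions, the delicate point is to check that this coupling does not reintroduce additional independent corners; one would likely exploit that on each fibre the individual $\omega_n$ is a genuine rank-one perturbation, so the Jak\v{s}i\'{c}--Last rank-one machinery applies directly there. This reduction from the $d$-dimensional geometry to an effective $s$-dimensional one is the step where the proof differs most from Theorem \ref{mainthm}, and where I expect the bulk of the technical work.
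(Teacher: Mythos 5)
Your overall plan---rerun the proof of Theorem \ref{mainthm} and argue that each degenerate direction collapses the count from $2^d$ to $2^s$---is the right one, and your combinatorial picture (only one admissible index in a direction with $l_i=1$) matches what actually happens. But the mechanism you propose for the decoupling is a genuine gap. The perturbations $\omega_nP_n$ have rank $l_1\cdots l_s>1$ even after the box degenerates to a slab, and they keep that rank on any transverse fibre, so there is no ``genuine rank-one Anderson-type perturbation'' to which the Jak\v{s}i\'{c}--Last rank-one machinery could be applied; nor does the argument here involve cyclic vectors or trace-class identities between neighbouring slabs. The multiplicity bound in this paper never passes through a fibre decomposition at all: by Lemmas \ref{multLem1} and \ref{multLem2} it is read off from the eigenvalue clusters of the finite matrix $A_r^{\omega,\lambda}=r^2P_0\Delta P_0+rP_0\Delta(I-P_0)\Delta P_0+\sum_{\norm{n}_1=1}\omega_nP_0\Delta P_n\Delta P_0+\sum_{i=1}^d\lambda_iP_0\Delta P_{e_i}\Delta P_0$ coming from the Neumann expansion in Lemma \ref{lemMain}.

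The reason the directions $i>s$ drop out is a one-line computation that your proposal is missing. The term $P_0\Delta P_{\pm e_i}\Delta P_0$ is the projection onto the face $\{x\in\Lambda(0):x_i=l_i\}$ (respectively $\{x\in\Lambda(0):x_i=1\}$), and when $l_i=1$ both faces are all of $\Lambda(0)$, so these terms act as scalars on $\ell^2(\Lambda(0))$. Equivalently, the $i$-th tensor factor $r^2\Delta_L+(\omega_{-e_i}+r)\proj{\delta_1}+(\omega_{e_i}+\lambda_i+r)\proj{\delta_l}$ lives on $\ell^2(\{1\})$, where $\Delta_L=0$, and therefore equals the number $\omega_{e_i}+\omega_{-e_i}+\lambda_i+2r$. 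Hence $A_r^{\omega,\lambda}$ is exactly the $s$-fold tensor model of Lemma \ref{lemMain} plus the scalar $\bigl(\sum_{i=s+1}^d(\omega_{e_i}+\omega_{-e_i}+\lambda_i+2r)\bigr)I$; its eigenvalues are the $E_{n_1,\cdots,n_s}$ with $n_i\in\{1,\cdots,l_i\}$ for $i\leq s$, shifted by a common constant, and the cluster counting and the number-theoretic arguments of Theorem \ref{mainthm} go through verbatim with $d$ replaced by $s$. There is no ``delicate point'' about $\Delta$ re-coupling the fibres: every such coupling is absorbed either into the uniformly bounded error $D_{n_1,\cdots,n_s}(\omega,\lambda,r)$ or into the scalar shift, neither of which can affect multiplicities. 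So the statement is correct, but the heavy machinery you flag as the bulk of the work is not needed and, as described, would not apply.
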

Notice that for $s=1$, third part gives simplicity for the singular spectrum. 
In this case, proof for simplicity  is much easier than the procedure followed here. See Naboko-Nichols-Stolz \cite{NNS} for a simpler proof in the regime of pure point spectrum. 
Recall that for large disorder the spectrum has a non-empty singular component in view of the localization results already proved by several authors, for example Aizenman-Molchanov\cite{AM}.
\subsection{Ideas of proof}
The proof of the theorem uses properties of Matrix-valued Herglotz functions (see \cite{GT1} for some of their properties) we will focus on the linear map
$$P_0(H^\omega-z)^{-1}P_0:\ell^2(\Lambda(0))\rightarrow\ell^2(\Lambda(0))$$
for $z\in\CC\setminus\sigma(H^\omega)$.  We use the notation $\Lambda(n)=\{x\in\ZZ^d: n_il_i<x_i\leq (n_i+1)l_i~\forall i\}$ to represent the support of the projection $P_n$ in $\ZZ^d$.
The main idea is to show that the multiplicity of singular spectrum is bounded above by the size of a cluster of eigenvalues for a matrix of the form $r^2 P_0\Delta P_0+\sum_{\norm{n}_1=1} (\lambda_n+r) Q_n$, where $Q_n$ are projections onto the faces of the box $\Lambda(0)$ and $r$ is large enough. The matrix arises by taking the first few terms of the Neumann series associated to $P_0(H^\omega-z)^{-1}P_0$.
We will show that the bound on the multiplicity is independent of the perturbation, so we take $\lambda_n$ from intervals chosen appropriately for our calculation.
In this case we can show that the gaps between eigenvalue clusters are large. 
This provides us a way of bounding the number of eigenvalues in each of the clusters from above. Finally for showing simplicity of the spectrum, we only need to show that each cluster has only one point.

The above scheme is implemented in several steps.
First we will show that the maximum multiplicity of singular spectrum is given by essential supremum of \emph{maximum eigenvalue multiplicity for the matrix} $\lim_{\epsilon\downarrow 0}P_0(H^\omega-x-\iota \epsilon)^{-1}P_0$ with respect to Lebesgue measure. 
This is the statement of lemma \ref{multLem1}, and this follows as a consequence of theorem \ref{singSubSpThm} and Poltoratskii's theorem \cite{POL1}. 
Spectral Averaging \cite[Corollary 4.2]{CH1} also plays an important role in its proof. 
Next, in the lemma \ref{multLem2} we show that if we have a bound on the maximum multiplicity of eigenvalue for the matrix $P_0(H^\omega+\lambda P_n-z)^{-1}P_n$ for $\lambda$ in some interval and $z$ in some positive Lebesgue measure set of $\RR$, then the bound holds almost everywhere (w.r.t Lebesgue measure). As a consequence we only need to bound the multiplicity of eigenvalue for the matrix 
$$P_0\left(H^\omega+\sum_{i=1}^d\lambda_i P_{e_i}-z\right)^{-1}P_0$$
for $\{\lambda_i\}_{i=1}^d$ in some open rectangle of $\RR^{d}$ and $z$ in some open set of $\RR\setminus(-(\norm{H^\omega}+\max_i |\lambda_i|),\norm{H^\omega}+\max_i |\lambda_i|)$. 
The bounds are obtained in lemma \ref{lemMain} when $\{\lambda_i\}_{i=1}^d$ are chosen from certain intervals. 
The independence of $\{\lambda_i\}_{i=1}^d$ obtained in lemma \ref{multLem2} is the main reason why the results does not depend upon the strength of disorder, even though lemma \ref{lemMain} gives the bound when $\{\lambda_i\}_{i=1}^d$ is large. 
The proof of part (1) and (2) follows certain counting argument in lemma \ref{lemMain} and the lemma \ref{lem5}. The statement of the lemma \ref{lem5} can be interpreted as $0\not\in\sigma(P_0\Delta P_0)$ for certain choices of $\{l_i\}_i$.
Combining this with the inequality provided by lemma \ref{lemMain} gives the simplicity.

The proof of the theorem is divided into three sections. In second section, criterion for bounding the multiplicity are given, and they do not depend upon the particular form of the operator \eqref{MainOperator}. 
Third section contains the lemma \ref{lemMain} which is highly specialised for the operator. The proof of the theorem \ref{mainthm} is also present in this section. 
Most of the specialised computations related to the operator \eqref{MainOperator} can be found in the fourth section. 

In the description of the model \eqref{MainOperator} we can let $\{\omega_n\}_n$ to be independent real bounded random variables following absolutely continuous distribution. 
The fact that $\{\omega_n\}_n$ are identically distributed is not used in the proof, only independence and absolute continuity of the distribution for $\{\omega_n\}_n$ are used. 
Though we do need $H^\omega$ to be bounded for lemma \ref{multLem2} to work.
So if we assume that the random variables $\{\omega_n\}_n$ are independent and follows absolutely continuous distribution, we need to assume that the random variable $\sup_{n\in\ZZ^d}|\omega_n|$ is bounded.

\noindent{\bf Acknowledgement:}  The author thank Krishna Maddaly for valuable discussion and suggestions.
\section{Criterion for bounding Multiplicity}
Following lemma provides an easy way to bound the multiplicity of singular spectrum for the operator \eqref{MainOperator}. The only result used in this lemma which depends on the particular structure of the operator $H^\omega$ is  proved in the lemma  \ref{lemInvertCond}.
\begin{lemma}\label{multLem1}
The maximum multiplicity of singular spectrum of $H^\omega$ is bounded by essential supremum of
$$f(x):=\text{maximum eigenvalue multiplicity of the matrix }P_n(H^\omega-x-\iota 0)^{-1}P_n$$
with respect to Lebesgue measure.
\end{lemma}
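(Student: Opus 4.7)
The centerpiece of the argument is the matrix-valued Herglotz function $M^\omega(z):=P_n(H^\omega-z)^{-1}P_n$ acting on $\mathrm{Ran}(P_n)$, together with its associated matrix-valued spectral measure $\Sigma^\omega(\cdot):=P_nE_{H^\omega}(\cdot)P_n$. Theorem \ref{singSubSpThm} combined with the invertibility lemma \ref{lemInvertCond} will ensure that $\mathrm{Ran}(P_n)$ is cyclic for the singular subspace of $H^\omega$ almost surely, so the multiplicity of $\sigma_{\mathrm{sing}}(H^\omega)$ reduces to the spectral multiplicity of the singular part of $\Sigma^\omega$.

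First I would apply Poltoratskii's theorem entrywise to the ratio $M^\omega(z)/F^\omega(z)$, where $F^\omega(z):=\mathrm{tr}\,M^\omega(z)$ is the scalar Herglotz transform of the scalar measure $\tau^\omega:=\mathrm{tr}\,\Sigma^\omega$. This yields
$$D^\omega(x):=\lim_{\epsilon\downarrow 0}\frac{M^\omega(x+\iota\epsilon)}{F^\omega(x+\iota\epsilon)}=\frac{d\Sigma^\omega}{d\tau^\omega}(x),\qquad\tau^\omega_s\text{-a.e.\ }x,$$
and the multiplicity of the singular part of $\Sigma^\omega$ at $x$ equals $\mathrm{rank}\,D^\omega(x)$. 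Since $F^\omega(x+\iota\epsilon)\to\infty$ on $\mathrm{supp}(\tau^\omega_s)$, this is equivalently the number of eigenvalues of $M^\omega(x+\iota\epsilon)^{-1}$ that cluster at $0$ as $\epsilon\downarrow 0$.

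To convert this $\tau^\omega_s$-a.e.\ statement into a Lebesgue-a.e.\ statement about the eigenvalue multiplicities of the limit matrix, I use a finite-rank perturbation identity together with spectral averaging. The resolvent identity gives
$$P_n(H^\omega+\lambda P_n-z)^{-1}P_n=\bigl(M^\omega(z)^{-1}+\lambda I\bigr)^{-1},$$
so $x$ is singular for $H^\omega+\lambda P_n$ with multiplicity $m$ precisely when $-\lambda$ is an eigenvalue of $\lim_{\epsilon\downarrow 0}M^\omega(x+\iota\epsilon)^{-1}$ with multiplicity $m$. By Combes-Hislop spectral averaging \cite[Corollary 4.2]{CH1}, integrating $\langle u,E_{H^\omega+\lambda P_n}(\cdot)u\rangle$ over $\lambda$ produces a measure absolutely continuous with respect to Lebesgue. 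A Fubini argument in the variable $\omega_n$ then re-parameterises the singular points of $H^\omega$ as eigenvalue equations for a ``base'' operator with $\omega_n$ freed; the eigenvalue multiplicities of $M^\omega(x+\iota 0)$ and of its inverse coincide on nonzero eigenvalues, and a shift by $\lambda I$ preserves multiplicities, so the singular multiplicity at $x$ is bounded by $f(x)$ at $\mathbb{P}$-a.e.\ $\omega$ and Lebesgue-a.e.\ $x$, which is exactly the essential supremum bound required.

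\textbf{Main obstacle.} The delicate step is the transition from the $\tau^\omega_s$-a.e.\ statement provided by Poltoratskii (which identifies the singular multiplicity exactly on singular supports) to the Lebesgue-a.e.\ statement defining $f(x)$ (which sees only the Lebesgue-typical part). The identity $M_\lambda(z)^{-1}=M^\omega(z)^{-1}+\lambda I$ is the algebraic bridge: it interchanges the roles of ``singular $x$'' and ``eigenvalue $-\lambda$ of $M^{-1}$'' as $\omega_n$ varies, and combined with spectral averaging in $\omega_n$ this forces the singular multiplicity at any $x\in\sigma_{\mathrm{sing}}(H^\omega)$ to be controlled by the eigenvalue multiplicities of $M$ at Lebesgue-typical $x$. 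Lemma \ref{lemInvertCond} supplies the invertibility of $M^\omega$ needed to apply this identity wherever required.
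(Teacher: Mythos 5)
Your proposal is correct and follows essentially the same route as the paper's own proof: cyclicity of $\mathrm{Ran}(P_n)$ for the singular subspace via Theorem \ref{singSubSpThm} and Lemma \ref{lemInvertCond}, Poltoratskii's theorem to identify the singular multiplicity with the rank of the normalized boundary-value matrix, the rank-$N$ resolvent identity (your $M_\lambda(z)^{-1}=M^\omega(z)^{-1}+\lambda I$ is the same algebraic bridge as the paper's kernel inclusion \eqref{rangeSetEq1}), and Combes--Hislop spectral averaging together with a Fubini argument in $\omega_n$ to pass from the singular measure to Lebesgue-a.e.\ $x$. No substantive difference in strategy.
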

\begin{proof}
First few notations are needed, for any $n\in\ZZ^d$ set
\begin{equation}\label{condEq1}
 \Hi^\omega_n=\overline{\{f(H^\omega)\phi:\phi\in P_n\ell^2(\ZZ^d),f\in C_c^\infty(\RR)\}},
\end{equation}
the closed $H^\omega$-invariant subspace of $\ell^2(\ZZ^d)$ containing $\ell^2(\Lambda(n))$. 
Set $E^\omega$ and $E^\omega_{sing}$ to be the spectral measure for the operator $H^\omega$ and orthogonal projection onto the singular part of spectrum for the operator $H^\omega$ respectively. 
As a consequence of the Spectral theorem (see \cite[Theorem A.3]{AM1}) we have
$$(\Hi^\omega_n,H^\omega)\cong (L^2(\RR,P_n E^\omega(\cdot)P_n,P_n\ell^2(\ZZ^d)),Id)$$
where the operator $Id$ is multiplication by identity. 
Since the measure $P_n E^\omega(\cdot)P_n$ is absolutely continuous with respect to the trace measure $\sigma^\omega_n(\cdot)=tr(P_n E^\omega(\cdot)P_n)$, there exists a matrix $M^\omega_n\in L^1(\RR,\sigma^\omega_n, M_{rank(P_n)}(\CC))$ such that 
$$P_nE^\omega(dx)P_n=M_n^\omega(x)\sigma^\omega_n(dx)$$
for almost all $x$ with respect to $\sigma^\omega_n$. Finally since $P_n E^\omega(\cdot)P_n$ is non-negative, we have $M_n^\omega(x)\geq 0$ for almost all $x$ w.r.t $\sigma^\omega_n$.

We will use theorem \ref{singSubSpThm} to obtain 
\begin{equation}\label{singSpecEq1}
 E_{sing}^\omega\ell^2(\ZZ^d)=E_{sing}^\omega\Hi^\omega_n\qquad\forall n\in\ZZ^d
\end{equation}
almost surely. But for using theorem \ref{singSubSpThm} we need to show
\begin{equation}\label{condEq2}
 \mathbb{P}(\omega: Q^\omega_n P_m\text{ has same rank as }P_m)=1
\end{equation}
for any $n,m\in\ZZ^d$, here $Q^\omega_n$ is the canonical projection from $\ell^2(\ZZ^d)$ to $\Hi^\omega_n$. This is proved in lemma \ref{lemInvertCond}.

The Borel transform of the measure $P_n E^\omega(\cdot)P_n$ is $P_n(H^\omega-z)^{-1}P_n$,  and using Poltoratskii's theorem we have
\begin{equation}\label{polEq1}
 \lim_{\epsilon\downarrow 0}\frac{1}{tr(P_n (H^\omega-x-\iota\epsilon)^{-1}P_n)}P_n (H^\omega-x-\iota\epsilon)^{-1}P_n= M^\omega_n(x)
\end{equation}
for almost all $x$ w.r.t $\sigma^\omega_{n,sing}$, where the measure $\sigma^\omega_{n,sing}$ denotes the singular part of the measure $\sigma^\omega_n$ with respect to Lebesgue measure. Since
$$E_{sing}^\omega\Hi^\omega_n\cong L^2(\RR, M_n^\omega(x)d\sigma^\omega_{n,sing}(x),P_n\ell^2(\ZZ^d)),$$
to get the multiplicity of singular part all one needs to do is find essential supremum of the function $f(x)=rank(M_n^\omega(x))$ with respect to the measure $\sigma^\omega_{n,sing}$. As a consequence of resolvent equation (see \cite[equation (3.4)]{AM1}) we have
\begin{align}\label{rangeSetEq1}
range\left(\lim_{\epsilon\downarrow 0}\frac{1}{tr(P_n(H^\omega+\lambda P_n-x-\iota\epsilon)^{-1}P_n)}P_n(H^\omega+\lambda P_n-x-\iota\epsilon)^{-1}P_n\right)\nonumber\\
\subseteq ker(I+\lambda\lim_{\epsilon\downarrow 0} P_n(H^\omega-x-\iota \epsilon)^{-1}P_n),
\end{align}
for all $x$ such that $\lim_{\epsilon\downarrow 0}P_n(H^\omega-x-\iota \epsilon)^{-1}P_n$ exists and 
$$\lim_{\epsilon\downarrow 0}\frac{1}{tr(P_n(H^\omega+\lambda P_n-x-\iota\epsilon)^{-1}P_n)}=0.$$
We will use $P_n(H^\omega-x-\iota 0)^{-1}P_n$ to denote $\lim_{\epsilon\downarrow 0}P_n(H^\omega-x-\iota \epsilon)^{-1}P_n$. For $\omega\in\Omega$ fixed, the set 
$$\{x\in\RR:P_n(H^\omega-x-\iota 0)^{-1}P_n\text{ exists}\}$$ 
has full Lebesgue measure. Let $S\subseteq \{x\in\RR:P_n(H^\omega-x-\iota 0)^{-1}P_n\text{ exists}\}$ be a set of full Lebesgue measure, then using spectral averaging result \cite[Corollary 4.2]{CH1} one has $\sigma^{\tilde{\omega}^\lambda}_n(\RR\setminus S)=0$ for almost all $\lambda$ w.r.t. Lebesgue measure.
Here we use the notation $\tilde{\omega}^\lambda_k=\omega_k$ for all $k\neq n$, and $\tilde{\omega}^\lambda_n=\omega_n+\lambda$. So the essential support of $\sigma^{\tilde{\omega}^\lambda}_{n,sing}$ is 
$$S^\omega_\lambda=\{x\in S: \lim_{\epsilon\downarrow 0}(tr(H^\omega+\lambda P_n-x-\iota \epsilon))^{-1}=0\}$$
for almost all $\lambda$ w.r.t Lebesgue measure.
As a consequence of \eqref{polEq1} and \eqref{rangeSetEq1}, the multiplicity of singular spectrum for the operator $H^\omega+\lambda P_n$ is upper bounded by
$$ess\text{-} sup\{dim(ker(\lambda^{-1}I+ P_n(H^\omega-x-\iota 0)^{-1}P_n)):\text{for all $x$ w.r.t }\sigma^{\tilde{\omega}^\lambda}_{n,sing}\}$$
for almost all $\lambda$ w.r.t. Lebesgue measure. 
Since we can leave any zero Lebesgue measure set, giving an upper bound on the multiplicity of eigenvalues for $P_n(H^\omega-x-\iota 0)^{-1}P_n$ for $x\in S$ is enough. Hence proving the claim.

\end{proof}

As a consequence of above lemma, it is enough to bound the multiplicity of $P_0(H^\omega-x-\iota 0)^{-1}P_0$ on a set of full Lebesgue measure. Following lemma is a modification of \cite[Lemma A.2]{NNS} and provides a way of doing this. 
Part of the claim is that the maximum multiplicity of eigenvalues of $P_n(H^\omega-z)^{-1}P_n$ on its domain of definition is independent of $z$. Other part is that the maximum eigenvalue multiplicity is independent of single perturbation.
\begin{lemma}\label{multLem2}
Let  $J\subset\RR$ be an open interval and $U\subset\RR\setminus([-\norm{H^\omega},\norm{H^\omega}]+J)$ be a positive Lebesgue measure set such that the maximum multiplicity of eigenvalues of the matrix $P_0(H^\omega+\lambda P_n-z)^{-1}P_0$ is at most $k$ for $(z,\lambda)\in U\times J$. 
Then the set
\begin{align*}
Sim_\lambda:=\{E\in\RR: \lim_{\epsilon\downarrow 0} P_0(H^\omega+\lambda P_n-E-\iota \epsilon)^{-1}P_0\text{ exists and maximum}\\
 \text{multiplicity of eigenvalue is }k\}
\end{align*}
has full Lebesgue measure for almost all $\lambda$ w.r.t Lebesgue measure.
\end{lemma}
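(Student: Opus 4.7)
The plan is to encode the multiplicity bound as non-vanishing of a single complex-analytic function of $(z,\lambda)$, and then to propagate the non-vanishing from the hypothesised positive-measure set $U\times J$ to almost every pair $(\lambda,E)$ on the real axis via analytic continuation and boundary-value theory. Concretely, I would set
$$A(z,\lambda):=P_0(H^\omega+\lambda P_n-z)^{-1}P_0,\qquad p(z,\lambda,\mu):=\det(\mu I-A(z,\lambda)),$$
$$G(z,\lambda):=\mathrm{Res}_\mu\bigl(p(z,\lambda,\cdot),\,\partial_\mu^k p(z,\lambda,\cdot)\bigr),$$
all jointly analytic in $(z,\lambda)$ on the open domain $\Omega:=\{(z,\lambda)\in\CC^2:z\notin\sigma(H^\omega+\lambda P_n)\}$. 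The key algebraic fact is that $G(z,\lambda)\ne 0$ precisely when no eigenvalue of $A(z,\lambda)$ has multiplicity exceeding $k$, so the hypothesis of the lemma reads exactly as $G\ne 0$ on the real set $U\times J$.

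The condition $U\subset\RR\setminus([-\norm{H^\omega},\norm{H^\omega}]+J)$ guarantees $z\notin\sigma(H^\omega+\lambda P_n)$ for every $(z,\lambda)\in U\times J$, placing this set inside the real locus of $\Omega$. A real-analytic function non-vanishing on a real two-dimensional set of positive Lebesgue measure cannot vanish identically on a connected real-analytic neighbourhood, so $G\not\equiv 0$ on $\Omega$. A short analytic-uniqueness argument in $\lambda$---if $G(z_0,\cdot)$ vanished on a positive-measure real subset, it would vanish identically as an analytic function of $\lambda$, and letting $z_0$ range over admissible points would force $G\equiv 0$---then yields that for a.e.\ $\lambda\in\RR$ the slice $z\mapsto G(z,\lambda)$ is analytic on the connected set $\CC\setminus\sigma(H^\omega+\lambda P_n)$ and not identically zero.

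For any such good $\lambda$, $A(\cdot,\lambda)$ is matrix-valued Herglotz on the upper half plane, so each entry satisfies $|A_{ij}(z,\lambda)|\leq 1/\mathrm{Im}(z)$ and lies in the Nevanlinna class; hence $G(\cdot,\lambda)$, a polynomial in these entries, is also in the Nevanlinna class and admits non-tangential boundary values $G(E+\iota 0,\lambda)$ for a.e.\ $E\in\RR$. The Luzin--Privalov uniqueness theorem then forces these boundary values to be nonzero on a full Lebesgue-measure subset of $\RR$, since $G(\cdot,\lambda)\not\equiv 0$ on the upper half plane; combined with the a.e.\ existence of the boundary matrix $A(E+\iota 0,\lambda)$, this gives $E\in Sim_\lambda$ for a.e.\ $E$, which is the desired conclusion. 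The main subtlety I anticipate is verifying the Luzin--Privalov hypotheses for $G(\cdot,\lambda)$, which is built indirectly from a characteristic polynomial and a resultant, so Nevanlinna-class membership must be argued via closure of the class under polynomial combinations of the Herglotz entries of $A$; the analytic-uniqueness bookkeeping in $\lambda$ is the other point requiring care but should not pose a serious obstacle.
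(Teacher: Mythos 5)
Your overall architecture—encode the multiplicity bound in an analytic function of $(z,\lambda)$, propagate non-vanishing by the identity theorem, and pass to real boundary values via bounded characteristic and Luzin--Privalov—is sound and close in spirit to what the paper (following \cite[Lemma A.2]{NNS}) does; the boundary-value step is in fact spelled out more carefully in your version than in the paper. The fatal problem is your ``key algebraic fact.'' For $k\geq 2$ the non-vanishing of $G(z,\lambda)=\mathrm{Res}_\mu\bigl(p,\partial_\mu^k p\bigr)$ is \emph{not} equivalent to the absence of eigenvalues of multiplicity exceeding $k$: the resultant vanishes as soon as $p$ and $\partial_\mu^k p$ share \emph{some} root, which can happen with all roots of $p$ simple. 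Take $k=2$ and the self-adjoint matrix $\mathrm{diag}(-a,0,a)$, so $p(\mu)=\mu^3-a^2\mu$ has three simple roots while $\partial_\mu^2p(\mu)=6\mu$ vanishes at the root $\mu=0$ of $p$; hence $G=0$ although the maximum multiplicity is $1$. Conceptually, the locus ``some root has multiplicity $\geq k+1$'' has codimension $k$ in the space of coefficients, so for $k\geq 2$ it is not a hypersurface and cannot be the zero set of any single polynomial in the entries of $A(z,\lambda)$; only the implication (multiplicity $>k$) $\Rightarrow$ ($G=0$) survives, which is the direction you do not need in order to exploit the hypothesis.

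Concretely, the hypothesis therefore does not translate into ``$G\neq 0$ on $U\times J$,'' so you cannot conclude $G\not\equiv 0$; worse, $G$ may vanish identically on all of $\Omega$ (whenever the spectral configuration persistently forces a common root of $p$ and $\partial_\mu^kp$, as in the symmetric example above), in which case the Nevanlinna/Luzin--Privalov step yields nothing. The repair is the one the paper uses: replace the single resultant by the condition that $\gcd\bigl(g,\tfrac{dg}{dx},\dots,\tfrac{d^kg}{dx^k}\bigr)$ is constant in $x$, where $g$ is the (denominator-cleared) characteristic polynomial. This is equivalent to the multiplicity bound and is expressed by the simultaneous vanishing of finitely many functions (the non-constant coefficients produced by Euclid's algorithm), each of which is a polynomial in $\lambda$ with coefficients holomorphic in $z$; the hypothesis forces each of them to vanish on $U\times J$, hence identically, and your analytic-continuation and boundary-value machinery then applies to this finite family exactly as you applied it to $G$. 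Note also that your encoding is correct precisely for $k=1$ (the discriminant), but the lemma is invoked in the paper with $k$ as large as $2^d-d$, so the general case cannot be avoided.
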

\begin{proof}
We will use the notation $G^\lambda_{p,q}(z)=P_p(H^\omega+\lambda P_n-z)^{-1}P_q$, then using the resolvent identity we have
$$G_{0,0}^\lambda(z)=G_{0,0}^0(z)-\lambda G^0_{0,n}(z)(I+\lambda G_{n,n}^0(z))^{-1}G_{n,0}^{0}(z).$$
Viewing $G^\lambda_{p,q}(z)$ as matrix over the standard basis of $P_m\ell^2(\ZZ^d)$ we define the polynomial
\begin{align*}
\tilde{g}_{\lambda,z}(x)&=\det(G_{0,0}^\lambda(z)-xI)\\
&=\det\left(G_{0,0}^0(z)-\lambda G^0_{0,n}(z)(I+\lambda G_{n,n}^0(z))^{-1}G_{n,0}^{0}(z)-xI\right)\\
&=\frac{p_l(z,\lambda)x^l+p_{l-1}(z,\lambda)x^{l-1}+\cdots+p_0(z,\lambda)}{\det(I+\lambda G^0_{n,n}(z))}
\end{align*}
where $l=rank(P_0)$. Here the coefficients $\{p_i(z,\lambda)\}_{i=0}^l$ are polynomials of matrix coefficients of $\{G^0_{p,q}(z)\}_{p,q=0,n}$ and $\lambda$.
Since denominator is independent of $x$ we only need to focus on
$$g_{\lambda,z}(x)=p_l(z,\lambda)x^l+p_{l-1}(z,\lambda)x^{l-1}+\cdots+p_0(z,\lambda).$$
The function
$$\mathcal{F}_{\lambda,z}(x)=gcd\left(g_{\lambda,z}(x),\frac{dg_{\lambda,z}}{dx}(x),\cdots,\frac{d^kg_{\lambda,z}}{dx^k}(x)\right)$$
is a constant function with respect to $x$ if the maximum multiplicity of eigenvalues of $G_{0,0}^\lambda(z)$ is $k$. Using 
$$gcd(p_1(x),\cdots,p_m(x))=gcd(p_1(x),\cdots,p_{n-2}(x),gcd(p_{n-1}(x),p_{n}(x)))$$
and Euclid's algorithm for polynomials, we have
$$\mathcal{F}_{\lambda,z}(x)=q_0(\lambda,z)+q_1(\lambda,z)x+\cdots+q_m(\lambda,z)x^m,$$
where $q_i$ are rational polynomials of $\{p_j(z,\lambda)\}_j$. We only need to focus on the numerators of $\{q_i\}_{i\geq 1}$ which will be denoted by $\{\tilde{q}_i\}_{i\geq 1}$. It is clear that $\tilde{q}_i$ are polynomial of matrix coefficients of $\{G^0_{p,q}(z)\}_{p,q=0,n}$ and $\lambda$,  hence are well defined on $(z,\lambda)\in(\CC\setminus\sigma(H^\omega))\times \RR$.
So we have $\tilde{q}_i(z,\lambda)=\sum_{j=0}^{m_i} a^i_j(z) \lambda^j$ for each $i$. The functions $\{a^i_j\}$ are holomorphic on $\CC\setminus\sigma(H^\omega)$.

Now using the fact that the maximum multiplicity of eigenvalues of $G_{0,0}^\lambda(z)$ is $k$ for $(z,\lambda)\in U\times J$, we have
$$a^i_j(z)=0\qquad\forall z\in U, i>0,1\leq j\leq m_i.$$
Since $U$ has positive Lebesgue measure, this implies (because of properties of zero set of holomorphic functions, see \cite{BR1}) that the holomorphic functions $a^i_j$ are identically zero on the domain. So the function $\mathcal{F}_{\lambda,z}(x)$ is constant with respect to $x$ for $(z,\lambda)\in (\CC\setminus\sigma(H^\omega))\times\RR$. So we get that the maximum multiplicity of the matrix $G_{0,0}^\lambda(z)$ is $k$ for $(z,\lambda)\in (\CC\setminus\sigma(H^\omega))\times\RR$, hence the conclusion follows.

\end{proof}
Using this lemma inductively, all we need to do is bound the multiplicity of eigenvalues of the matrix 
$$G^{\omega,\lambda}_{0,0}(z)=P_0\left(H^\omega+\sum_{i=1}^m\lambda_i P_{n_i}-z\right)^{-1}P_0$$
for $\lambda_i\in I_i$, $n_i\in\ZZ^d$ for $1\leq i\leq m$ for some $m\in\NN$ and $z\in U$ a positive Lebesgue measure set. Here the choice of the intervals $I_i\subset\RR$ and $U\subset\RR$ are up to us. The reason to confine the set $U$ in $\RR$ is because otherwise we loose the normality of the matrix $G^{\omega,\lambda}_{0,0}(z)$.
\section{Proof of main result}
Using the lemma \ref{multLem2} inductively it is enough to bound the multiplicity of eigenvalues for the matrix
$$G^{\omega,\lambda}_{0,0}(z)=P_0\left(H^\omega+\sum_{i=1}^d \lambda_i P_{e_i}-z\right)^{-1}P_0$$
for $(\lambda_1,\cdots,\lambda_d)$ in some open rectangle of $\prod_{i=1}^d I_i\in\RR^{d}$ and $z$ in some interval $U\subset \RR$ far enough.

Setting
$$\tilde{H}^{\omega,\lambda}=P_0\Delta P_0+(I-P_0)\Delta(I-P_0)+\sum_{n}\omega_n P_n+\sum_{i=1}^d\lambda_i P_{e_i},$$
then using resolvent equation we have
$$G^{\omega,\lambda}_{0,0}(z)=\left(P_0\Delta P_0+\omega_0 P_0-zP_0-P_0\Delta(I-P_0)(\tilde{H}^{\omega,\lambda}-z)^{-1}(I-P_0)\Delta P_0 \right)^{-1}.$$
The maximum multiplicity of $G^{\omega,\lambda}_{0,0}(z)$ is same as maximum multiplicity of
$$H^{\omega,\lambda}_z=P_0\Delta P_0-P_0\Delta(I-P_0)(\tilde{H}^{\omega,\lambda}-z)^{-1}(I-P_0)\Delta P_0.$$
Choosing $z\in \RR$ will make sure that above matrix is self adjoint and so diagonalizable. So any discrepancy between geometric multiplicity and algebraic multiplicity of eigenvalues does not arises. 
\begin{lemma}\label{lemMain}
There exists an open rectangle $\prod_{i=1}^d I_i\subset\RR^{d}$ such that the eigenvalues of the matrix 
$$r^2P_0\Delta P_0-r^2P_0\Delta(I-P_0)(\tilde{H}^{\omega,\lambda}-r)^{-1}(I-P_0)\Delta P_0$$
for $\lambda_i\in I_i$ for all $i$ and $r\gg\mathcal{R}(\norm{H^\omega},\max_{i}\sup_{\lambda_i\in I_i}|\lambda_i|,d,\{l_i\}_i)$ are of the form:
\begin{align}\label{lemMainEigRes}
E_{n_1,\cdots,n_d}&=2r^2\sum_{i=1}^d \cos\frac{\pi n_i}{l_i+1}+4r\sum_{i=1}^d \frac{1}{l_i+1}\sin^2\frac{\pi n_i}{l_i+1}\nonumber\\
&+2 \sum_{i=1}^d \frac{\omega_{e_i}+\omega_{-e_i}+\lambda_{i}}{l_i+1}\sin^2\frac{\pi n_i}{l_i+1}+D_{n_1,\cdots,n_d}(\omega,\lambda,r)
\end{align}
for $n_i\in\{1,\cdots,l_i\}$ for all $i$. We have the bound 
$$|D_{n_1,\cdots,n_d}(\omega,\lambda,r)|\leq 20d^3\max_i(l_i+1)^3.$$
Finally we have
$$|E_{n_1,\cdots,n_d}-E_{m_1,\cdots,m_d}|>10d^3\max_i(l_i+1)^3$$
whenever there exists $i$ such that $m_i\not\in\{n_i,l_i+1-n_i\}$.
\end{lemma}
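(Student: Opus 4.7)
The plan is to expand the matrix in powers of $1/r$ via the Neumann series for $(\tilde H^{\omega,\lambda}-r)^{-1}$, identify a leading tensor-product operator, carry out two layers of Rayleigh--Schr\"odinger perturbation theory, and close by a direct separation estimate.

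For $r>2\|\tilde H^{\omega,\lambda}\|$ the Neumann series $(\tilde H^{\omega,\lambda}-r)^{-1}=-r^{-1}\sum_{k\geq 0}(\tilde H^{\omega,\lambda}/r)^k$ converges in norm and converts the matrix in the statement into
\begin{equation*}
r^2 P_0\Delta P_0+rB_0+B_1+R,\qquad B_k:=P_0\Delta(I-P_0)(\tilde H^{\omega,\lambda})^k(I-P_0)\Delta P_0,
\end{equation*}
with $\|R\|=O(1/r)$ uniformly on bounded $\lambda$-sets. A direct computation shows $B_0=P_0\Delta(I-P_0)\Delta P_0$ acts as the diagonal multiplication $(B_0u)(y)=\#\{i:y_i\in\{1,l_i\}\}u(y)$, which under the identification $\ell^2(\Lambda(0))\cong\bigotimes_i\ell^2(\{1,\ldots,l_i\})$ equals the tensor sum $\sum_i q_i$, where $q_i$ is the rank-$2$ projection onto $\{\delta_1,\delta_{l_i}\}$ in the $i$-th slot. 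Since also $P_0\Delta P_0=\sum_i\Delta_i$ (tensor sum of $1$D Dirichlet Laplacians), the leading part becomes the commuting coordinatewise sum $\sum_i(r^2\Delta_i+rq_i)$.

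Each $1$D summand has simple spectrum with gap at least of order $(l_i+1)^{-2}r^2$, so applying non-degenerate Rayleigh--Schr\"odinger to the rank-$2$ piece $rq_i$ yields $1$D eigenvalues $2r^2\cos(\pi n_i/(l_i+1))+(4r/(l_i+1))\sin^2(\pi n_i/(l_i+1))+O(1)$, with eigenvectors $\phi^0_{n_i}+O(1/r)$, using the first-order correction $r\langle\phi^0_{n_i},q_i\phi^0_{n_i}\rangle=(4r/(l_i+1))\sin^2(\pi n_i/(l_i+1))$ for $\phi^0_{n_i}(y)=\sqrt{2/(l_i+1)}\sin(\pi n_i y/(l_i+1))$. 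Tensoring reproduces the first two sums in $E_{n_1,\ldots,n_d}$ with approximate eigenvectors $\bigotimes_i\phi^0_{n_i}$. For the $B_1$ piece I introduce the shell vector $\psi_n=(I-P_0)\Delta P_0\bigotimes_i\phi^0_{n_i}$---supported on the exterior shell of $\Lambda(0)$ with $\psi_n(y\pm e_i)=\phi^0_n(y)$ on the corresponding face---and use that $\tilde H^{\omega,\lambda}$ is block-diagonal in $(P_0,I-P_0)$. Its diagonal (potential) contribution produces exactly $2\sum_i(\omega_{e_i}+\omega_{-e_i}+\lambda_i)\sin^2(\pi n_i/(l_i+1))/(l_i+1)$ via the face-sum $\sum_{y:y_i=l_i}|\phi^0_n(y)|^2=(2/(l_i+1))\sin^2(\pi n_i/(l_i+1))$, matching the third sum of $E_n$. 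All remaining pieces---the shell-Laplacian off-diagonal of $B_1$, any mixings within degenerate leading eigenspaces, second-order $1$D corrections, and the tail $R$---have operator norm polynomially bounded in $d$, $\max_i(l_i+1)$, $\|H^\omega\|$, and $\max_i|\lambda_i|$, and are absorbed into $D_{n_1,\ldots,n_d}$; for $r$ beyond a threshold $\mathcal R(\|H^\omega\|,\max_i|\lambda_i|,d,\{l_i\})$ this gives $|D|\leq 20d^3\max_i(l_i+1)^3$.

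For the separation, the arithmetic input is that on $\{1,\ldots,l_i\}$ each value of $k\mapsto\sin^2(\pi k/(l_i+1))$ is attained only on the pair $\{k,l_i+1-k\}$, so if $m_i\notin\{n_i,l_i+1-n_i\}$ for some $i$ the coefficient of $\lambda_i$ in $E_n-E_m$, namely $2(\sin^2(\pi n_i/(l_i+1))-\sin^2(\pi m_i/(l_i+1)))/(l_i+1)$, is nonzero with lower bound of order $(l_i+1)^{-3}$ (via $\sin^2 A-\sin^2 B=\sin(A+B)\sin(A-B)$). Writing $E_n-E_m=2r^2\Delta_C+4r\Delta_S+2\Delta_{S,\lambda}+\Delta_D$ with self-explanatory notation, I would pick $\prod_i I_i$ as a small open box with each $I_i$ centered at some $\lambda_i^\star$ of size polynomial in $d$ and $\max_i(l_i+1)$, positioned to avoid the finitely many $\lambda$-hyperplanes on which $\Delta_{S,\lambda}$ could vanish for some non-swap pair $(n,m)$. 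Then for $r$ large enough, whichever of $2r^2\Delta_C,\,4r\Delta_S,\,2\Delta_{S,\lambda}$ is the first nonzero term dominates $|\Delta_D|+10d^3\max_i(l_i+1)^3$, giving the gap. The main obstacle is the uniform quantitative step: verifying that such a rectangle exists simultaneously for all finitely many non-swap pairs, which is an elementary but combinatorial statement about the values of sines and cosines at rational multiples of $\pi$, and is precisely where the arithmetic of $\{l_i\}$ enters.
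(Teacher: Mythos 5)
Your route is essentially the paper's: Neumann expansion in $1/r$, recognition that the $r^2$-, $r$- and potential-order terms assemble into a tensor sum of one-dimensional blocks $r^2\Delta_{l_i}+(\omega_{-e_i}+r)\proj{\delta_1}+(\omega_{e_i}+\lambda_i+r)\proj{\delta_{l_i}}$, explicit $1$D eigenvalue asymptotics, absorption of the non-tensor term $P_0\Delta(I-P_0)\Delta(I-P_0)\Delta P_0$ and the tail into $D$ by norm bounds and eigenvalue continuity, and the trichotomy ($r^2$-term, $r$-term, $\lambda$-term) for the separation. The only stylistic differences are that you extract the $\omega,\lambda$ contribution by first-order perturbation theory on $B_1$ rather than folding it into the $1$D blocks, and you use Rayleigh--Schr\"odinger where the paper solves a $2\times2$ secular equation (its Lemma \ref{lemEigAprx}).

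The genuine gap is precisely the step you label ``the main obstacle'': producing a rectangle $\prod_i I_i$ on which
$\bigl|\sum_{i}\frac{\omega_{e_i}+\omega_{-e_i}+\lambda_i}{l_i+1}\bigl(\sin^2\frac{\pi n_i}{l_i+1}-\sin^2\frac{\pi m_i}{l_i+1}\bigr)\bigr|$
exceeds $100\,d^3\max_i(l_i+1)^3$ \emph{simultaneously for every non-swap pair} $(n,m)$. This is not a routine afterthought; it is the paper's Lemma \ref{lem4}, and without it the third displayed inequality of the statement is unproven, since when both the cosine sum and the $\sin^2/(l_i+1)$ sum cancel between $n$ and $m$, this $\lambda$-linear term is the only thing separating the two eigenvalues. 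Moreover your parenthetical quantitative guess is doubtful: because the $d$ summands can nearly cancel against each other, one cannot simply center each $I_i$ at a point of size polynomial in $d$ and $\max_i(l_i+1)$. The paper instead places the coefficients $a_i=\omega_{e_i}+\omega_{-e_i}+\lambda_i$ at geometrically separated scales, $\tfrac12(2/(\epsilon\delta))^{i}<a_i<(2/(\epsilon\delta))^{i}$ with $\delta\sim(d^3\max_i(l_i+1)^3)^{-1}$ and $\epsilon$ the minimal gap among the values $\frac{1}{l_i+1}\sin^2\frac{\pi j}{l_i+1}$, so that the highest-index nonvanishing difference dominates the sum of all lower-index ones. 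Your hyperplane-avoidance idea can be repaired (choose a point in a unit cube at definite distance from the finitely many bad hyperplanes and dilate), but the dilation factor then degenerates with $d$ and $\{l_i\}$ in essentially the same geometric way; either way this lemma must actually be stated and proved. Everything else in your outline matches the paper's argument.
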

\begin{proof}
The choices of the intervals will be done later. First observe that for $r\gg\norm{H^\omega}+2d\max_{n}|\lambda_n|$, we have
\begin{align*}
&H_r^{\omega,\lambda}=P_0\Delta P_0-P_0\Delta(I-P_0)(\tilde{H}^{\omega,\lambda}- r)^{-1}(I-P_0)\Delta P_0\\
&\qquad=P_0\Delta P_0+\frac{1}{ r}P_0\Delta(I-P_0)\Delta P_0+\frac{1}{r^2}P_0\Delta (I-P_0)\tilde{H}^{\omega,\lambda}(I-P_0)\Delta P_0+O\left(\frac{1}{r^3}\right)\\
&\qquad=P_0\Delta P_0+\frac{1}{ r}P_0\Delta(I-P_0)\Delta P_0+\frac{1}{r^2}P_0\Delta (I-P_0)\Delta(I-P_0)\Delta P_0\\
&\qquad\qquad+\frac{1}{r^2} \sum_{\norm{n}_1=1}\omega_n P_0\Delta P_n\Delta P_0+\frac{1}{r^2} \sum_{i=1}^d \lambda_i P_0\Delta P_{e_i}\Delta P_0+O\left(\frac{1}{r^3}\right).
\end{align*}
Notice that
\begin{align*}
 P_0\Delta(I-P_0)\Delta P_0&=P_0\Delta\left(\sum_{n\in\ZZ^d\setminus\{0\}}P_n\right)\Delta P_0=\sum_{\norm{n}_1=1}P_0\Delta P_n\Delta P_0
\end{align*}
because the $P_0\Delta P_n\Delta P_0$ can be non-zero if and only if the distance between the $\Lambda(n)$ and $\Lambda(0)$ is at most $1$.\\
\begin{minipage}{0.6\textwidth}
Next observe that (using Dirac notation)
\begin{align*}
&P_0\Delta P_{e_i} \Delta P_0\\
&=\sum_{\substack{p,q\in\Lambda_l(0)\\s\in\Lambda_l(e_i) }}\proj{\delta_p}\Delta\proj{\delta_s}\Delta\proj{\delta_q}\\
&=\sum_{\substack{p,q\in\Lambda_l(0)\\ s\in\Lambda_l(e_i)\\|p-s|=1\& |q-s|=1}} |\delta_p\rangle\langle\delta_q|.
\end{align*}
\end{minipage}
\begin{minipage}{0.35\textwidth}
\centering
\includegraphics[width=2.4in,keepaspectratio]{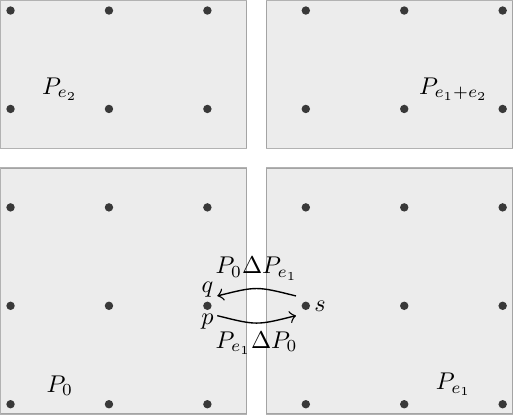}
\end{minipage}

\noindent For $p,q\in\Lambda_l(0)$ and $s\in\Lambda_l(e_i)$ such that $\norm{p-s}_1=1$ and $\norm{q-s}_1=1$, only possibility is if $s=p+e_i$ and $s=q+e_i$. So this implies $p=q$ and $s$ is unique. 
Hence the matrix $P_0\Delta P_{e_i}\Delta P_0$ is projection on $\{x\in\Lambda(0):x_i=l_i\}$, and similarly $P_0\Delta P_{-e_i}\Delta P_0$ is projection on $\{x\in\Lambda(0):x_i=1\}$.
These observations give us
\begin{align*}
&A_r^{\omega,\lambda}=r^2 P_0\Delta P_0+rP_0\Delta(I-P_0)\Delta P_0+\sum_{\norm{n}_1=1}\omega_n P_0\Delta P_n\Delta P_0+\sum_{i=1}^d \lambda_i P_0\Delta P_{e_i}\Delta P_0\\
&=\sum_{i=1}^d I^{i-1}\otimes (r^2\Delta_L+(\omega_{-e_i}+ r)\proj{\delta_1}+(\omega_{e_i}+\lambda_{i}+ r)\proj{\delta_l})\otimes I^{d-i}.
\end{align*}
Hence for $r$ large enough, using the lemma \ref{lemEigAprx} the eigenvalues of above matrix are of form
\begin{align*}
\tilde{E}_{n_1,\cdots,n_d}&=2r^2\sum_{i=1}^d \cos\frac{\pi n_i}{l_i+1}+4r\sum_{i=1}^d \frac{1}{l_i+1}\sin^2\frac{\pi n_i}{l_i+1}\\
&+2 \sum_{i=1}^d \frac{\omega_{e_i}+\omega_{-e_i}+\lambda_{i}}{l_i+1}\sin^2\frac{\pi n_i}{l_i+1}+\sum_{i=1}^d C_{l_i,n_i}+O\left(\frac{1}{r}\right)
\end{align*}
where
$$C_{l,n}=\frac{2}{(l+1)^2}\sin^2\frac{\pi n}{l+1}\sum_{\substack{m\neq n\\ m\equiv n mod 2}}\frac{\sin^2\frac{\pi m}{l+1}}{\cos\frac{\pi m}{l+1}-\cos\frac{\pi n}{l+1}}.$$
Now using the lemma \ref{lem4} with $\frac{1}{200d^3\max_i(l_i+1)^3}<\delta<\frac{1}{100d^3\max_i(l_i+1)^3}$ we have the rectangle $\prod_i I_i$ such that whenever $\sin^2\frac{\pi n_i}{l_i+1}\neq \sin^2\frac{\pi m_i}{l_i+1}$ for some $i$ we have
\begin{align}\label{lemIneqEq1}
&\left|\sum_{i=1}^d \frac{\omega_{e_i}+\omega_{-e_i}+\lambda_{i}}{l_i+1}\sin^2\frac{\pi n_i}{l_i+1}-\sum_{i=1}^d \frac{\omega_{e_i}+\omega_{-e_i}+\lambda_{i}}{l_i+1}\sin^2\frac{\pi m_i}{l_i+1}\right|\nonumber\\
&\qquad>100d^3\max_i(l_i+1)^3.
\end{align}
The condition $\sin^2\frac{\pi n_i}{l_i+1}= \sin^2\frac{\pi m_i}{l_i+1}$ can happen for $n_i,m_i\in\{1,\cdots,l_i\}$ if and only if $m_i\in\{n_i,l_i+1-n_i\}$. Hence, suppose that there exists $i$ such that $m_i\not\in\{n_i,l_i+1-n_i\}$ then using
\begin{align}\label{lemIneqEq2}
&|\tilde{E}_{n_1,\cdots,n_d}-\tilde{E}_{m_1,\cdots,m_d}|\nonumber\\
&\geq \left\{\begin{matrix} 2r^2\left|\sum_{j=1}^d\left(\cos\frac{\pi n_j}{l_j+1}-\cos\frac{\pi m_j}{l_i+1}\right)\right|+O(r) & \text{if }\sum_{j=1}^d\left(\cos\frac{\pi n_j}{l_j+1}-\cos\frac{\pi m_j}{l_i+1}\right)\neq 0\\ 
4r\left|\sum_{j=1}^d\frac{1}{l_j+1}\left(\sin^2\frac{\pi n_j}{l_j+1}-\sin^2\frac{\pi m_j}{l_i+1}\right)\right|+O(1) & \text{if }\sum_{j=1}^d\frac{1}{l_j+1}\left(\sin^2\frac{\pi n_j}{l_j+1}-\sin^2\frac{\pi m_j}{l_i+1}\right)\neq 0\\
&\text{and } \sum_{j=1}^d\left(\cos\frac{\pi n_j}{l_j+1}-\cos\frac{\pi m_j}{l_i+1}\right)=0\\
2\left|\sum_{j=1}^d\frac{\omega_{e_j}+\omega_{-e_j}+\lambda_{j}}{l_j+1}\left(\sin^2\frac{\pi n_j}{l_j+1}-\sin^2\frac{\pi m_j}{l_i+1}\right)\right| & \text{if }\sum_{j=1}^d\frac{1}{l_j+1}\left(\sin^2\frac{\pi n_j}{l_j+1}-\sin^2\frac{\pi m_j}{l_i+1}\right)= 0\\
~-\sum_{j=1}^d (C_{l_j,n_j}-C_{l_j,m_j})+O\left(\frac{1}{r}\right) &\text{and } \sum_{j=1}^d\left(\cos\frac{\pi n_j}{l_j+1}-\cos\frac{\pi m_j}{l_i+1}\right)=0\\
\end{matrix}\right.
\end{align}
Since
$$\min\left\{\left|\sum_{j=1}^d\cos\frac{\pi n_j}{l_j+1}-\cos\frac{\pi m_j}{l_i+1}\right|: \sum_{j=1}^d\cos\frac{\pi n_j}{l_j+1}\neq \sum_{j=1}^d\cos\frac{\pi m_j}{l_i+1}\right\}=\tilde{c}>0$$
and
$$\hspace{-1cm}\min\left\{\left|\sum_{j=1}^d\frac{1}{l_j+1}\left(\sin^2\frac{\pi n_j}{l_j+1}-\sin^2\frac{\pi m_j}{l_i+1}\right)\right|: \sum_{j=1}^d\frac{\sin^2\frac{\pi n_j}{l_j+1}}{l_j+1}\neq \sum_{j=1}^d\frac{\sin^2\frac{\pi m_j}{l_i+1}}{l_j+1}\right\}=\tilde{s}>0,$$
we can choose $r>\frac{200}{\min\{\tilde{c},\tilde{s}\}}\left(\frac{4d^3\max_i(l_i+1)^3}{\epsilon\delta}\right)^{d+1}$ (where $\epsilon,\delta$ are defined using lemma \ref{lem4} while getting \eqref{lemIneqEq1}) we that
\begin{align}\label{lemIneqEq3}
&\left|2r^2\sum_{j=1}^d\left(\cos\frac{\pi n_j}{l_j+1}-\cos\frac{\pi m_j}{l_i+1}\right)+4r\sum_{j=1}^d\frac{1}{l_j+1}\left(\sin^2\frac{\pi n_j}{l_j+1}-\sin^2\frac{\pi m_j}{l_i+1}\right)\right|\nonumber\\
&\qquad>200\left(\frac{4}{\epsilon\delta}\right)^{d+1}(d^{3}\max_i(l_i+1)^{3})^{d+1}
\end{align}
whenever the coefficients of $r^2$ or $r$ is non-zero. So using \eqref{lemIneqEq3} and using the bound $|C_{l,n}|<10(l+1)$ to get $\left|\sum_{j=1}^d (C_{l_j,n_j}-C_{l_j,m_j})+O\left(\frac{1}{r}\right)\right|\leq 40d\max_i(l_i+1)$, on the inequality  \eqref{lemIneqEq2} we have
\begin{equation}\label{lemMainEq1}
 |\tilde{E}_{n_1,\cdots,n_d}-\tilde{E}_{m_1,\cdots,m_d}|>50d^3\max_i(l_i+1)^3.
\end{equation}
Note that the spectrum of $A^{\omega,\lambda}_r$ is simple and the gap between the eigenvalues appear because of $r^{-1}$ order term of \eqref{lemEigAprxRes1}. Setting
$$A^{\omega,\lambda}_{r,t}=A^{\omega,\lambda}_r+t P_0\Delta (I-P_0)\Delta(I-P_0)\Delta P_0\qquad for~t\in(-1,2)$$
and using the continuity of eigenvalues as function of $t$ for $A^{\omega,\lambda}_{r,t}$, observe that corresponding to each eigenvalue $\tilde{E}_{n_1,\cdots,n_d}$ there exists an eigenvalue $E_{n_1,\cdots,n_d}^1$ of $A^{\omega,\lambda}_{r,1}$ such that 
\begin{equation}\label{lemMainEq2}
 |E_{n_1,\cdots,n_d}^{1}-\tilde{E}_{n_1,\cdots,n_d}|<(2d)^3,
\end{equation}
because $\norm{P_0\Delta (I-P_0)\Delta(I-P_0)\Delta P_0}<(2d)^3$. 
So combining above and taking care of $O(\frac{1}{r})$ term of $r^2 H^{\omega,\lambda}_r$ gives us that the eigenvalues of $r^2 H^{\omega,\lambda}_r$ are of the form
\begin{align*}
E_{n_1,\cdots,n_d}&=2r^2\sum_{i=1}^d \cos\frac{\pi n_i}{l_i+1}+4r\sum_{i=1}^d \frac{1}{l_i+1}\sin^2\frac{\pi n_i}{l_i+1}\\
&+2 \sum_{i=1}^d \frac{\omega_{e_i}+\omega_{-e_i}+\lambda_{i}}{l_i+1}\sin^2\frac{\pi n_i}{l_i+1}+D_{n_1,\cdots,n_d}(\omega,\lambda,r).
\end{align*}
Here
\begin{align*}
D_{n_1,\cdots,n_d}(\omega,\lambda,r)=\sum_{i=1}^d C_{l_i,n_i}+C_{n_1,\cdots,n_d}(\omega,\lambda,r)+O\left(\frac{1}{r}\right),
\end{align*}
and $|C_{n_1,\cdots,n_d}(\omega,\lambda,r)|\leq \norm{P_0\Delta(I-P_0)\Delta(I-P_0)\Delta P_0}\leq (2d)^3$. So we have the bound
\begin{align*}
 |D_{n_1,\cdots,n_d}(\omega,\lambda,r)|\leq 20\sum_{i=1}^d (l_i+1)^3+10d^3\leq 20d^3\max_i(l_i+1)^3.
\end{align*}
Finally using \eqref{lemMainEq2} and \eqref{lemMainEq1} together we have
$$|E_{n_1,\cdots,n_d}-E_{m_1,\cdots,m_d}|>10d^3\max_i(l_i+1)^3$$
when ever there exists $i$ such that $m_i\not\in\{n_i,l_i+1-n_i\}$.

\end{proof}
\subsection{Proof of Theorem \ref{mainthm}}
Lemma \ref{multLem1} implies that the bound on maximum multiplicity of singular spectrum for $H^\omega$ is given by the essential supremum of 
$$f(x):=\text{maxmimum eigenvalue multiplicity of the matrix }P_0(H^\omega-x-\iota 0)^{-1}P_0$$
w.r.t Lebesgue measure. 
So using lemma \ref{multLem2} inductively we have $\norm{f}_\infty=$ the maximum multiplicity of $P_0\left(H^\omega+\sum_{i=1}^d\lambda_i P_{e_i}-z\right)^{-1}P_0$ for $\{\lambda_i\}_{i=1}^d$ in some intervals and $z$ in some positive Lebesgue measure set of $\RR$. 
As seen in the beginning of the section, the maximum multiplicity of eigenvalues for 
$$P_0\left(H^\omega+\sum_{i=1}^d\lambda_i P_{e_i}-r\right)^{-1}P_0$$
is same as the maximum multiplicity of eigenvalues for 
\begin{equation}\label{pfEq2}
 r^2P_0\Delta P_0-r^2P_0\Delta(I-P_0)(\tilde{H}^{\omega,\lambda}-r)^{-1}(I-P_0)\Delta P_0
\end{equation}
whenever $r\not\in\sigma(H^\omega+\sum_{i=1}^d\lambda_i P_{e_i})$.

Lemma \ref{lemMain} provides the range for $\{\lambda_n\}_n$ and $r$ such that the eigenvalues follows the inequality
\begin{equation}\label{pfEq1}
 |E_{n_1,\cdots,n_d}-E_{m_1,\cdots,m_d}|>10d^3\max_i(l_i+1)^3
\end{equation}
whenever there exists $i$ such that $m_i\not\in\{n_i,l_i+1-n_i\}$. There are at most $2^d$ possible $(m_1,\cdots,m_d)$ such that $m_i\in\{n_i,l_i+1-n_i\}$, and so at most $2^d$ eigenvalues $E_{m_1,\cdots,m_d}$ for which the bound \eqref{pfEq1} may not hold. But if there exists an unique $i$ such that $n_i\neq m_i=l_i+1-n_i$, then (using $\cos\frac{\pi(l_i+1-n_i)}{l_i+1}=-\cos\frac{\pi n_i}{l_i+1}$)
\begin{align*}
|E_{n_1,\cdots,n_d}-E_{m_1,\cdots,m_d}|=4r^2\left|\cos\frac{\pi n_i}{l_i+1}\right|+O(r),
\end{align*}
which cannot be zero. Only possible way for $\cos\frac{\pi n_i}{l_i+1}$ to be zero is if $n_i=\frac{l_i+1}{2}$, but then $m_i=l_i+1-n_i=\frac{l_i+1}{2}=n_i$.
So multiplicity of any eigenvalue of \eqref{pfEq2} is at most $2^d-d$. 
This completes the proof of part (3) of the theorem.

For part (1) and (2) of the theorem, all we need to do is focus on the eigenvalue clusters $\{E_{m_1,\cdots,m_d}\}_{m_i\in\{n_i,l_i+1-n_i\}}$ and show that they are distinct.
\\
\\{\bf Proof of part (1):}  We need to show 
$$|E_{n_1,n_2}-E_{m_1,m_2}|>0\qquad\forall (n_1,n_2)\neq (m_1,m_2).$$
As observed in equation \eqref{pfEq1} we only need to show above for $m_i\in\{n_i,l_i+1-n_i\}$. Using $\cos\frac{\pi(l_i+1-n_i)}{l_i+1}=-\cos\frac{\pi n_i}{l_i+1}$, there are only three cases are possible:
\begin{align*}
E_{n_1,n_2}-E_{m_1,m_2}=\left\{\begin{matrix} 4r^2\left(\cos\frac{\pi n_1}{l_1+1}+\cos\frac{\pi n_2}{l_2+1}\right)+O(r) & (m_1,m_2)=(l_1+1-n_1,l_2+1-n_2) \\ 4r^2\cos\frac{\pi n_1}{l_1+1}+O(r) & (m_1,m_2)=(l_1+1-n_1,n_2) \\ 4r^2\cos\frac{\pi n_2}{l_2+1}+O(r) & (m_1,m_2)=(n_1,l_2+1-n_2) \end{matrix}\right.
\end{align*}
But for second and third cases, we already have $E_{n_1,n_2}\neq E_{m_1,m_2}$ for large enough $r$, because $\cos\frac{\pi n_i}{l_i+1}=0$ implies $n_i=\frac{l_i+1}{2}$ (because $1\leq n_i\leq l_i$), so $m_i=l_i+1-\frac{l_i+1}{2}=\frac{l_i+1}{2}=n_i$. 

For first case we have
\begin{align*}
0&=\cos\frac{\pi n_1}{l_1+1}-\cos\frac{\pi n_2}{l_2+1}\\
&=2\sin\frac{\pi}{2}\left(\frac{n_1}{l_1+1}+\frac{n_2}{l_2+1}\right)\sin\frac{\pi}{2}\left(\frac{n_1}{l_1+1}-\frac{n_2}{l_2+1}\right)\\
\Rightarrow\qquad&\frac{n_1}{l_1+1}+\frac{n_2}{l_2+1}\in\{0,1\}~~or~~\frac{n_1}{l_1+1}=\frac{n_2}{l_2+1}
\end{align*}
First case cannot occur because $1\leq n_i\leq l_i$ and second case cannot occur because $gcd(l_1+1,l_2+1)=1$.

So \eqref{pfEq2} has simple eigenvalues for $r$ and $\{\lambda_n\}_{\norm{n}_1=1}$ chosen as per lemma \ref{lemMain}. Hence the simplicity of singular spectrum for $H^\omega$.
\\
\\{\bf Proof of part (2):}  Similar to part (1), we only have to show
$$|E_{n_1,\cdots,n_d}-E_{m_1,\cdots,m_d}|>0\qquad\forall (n_1,\cdots,n_d)\neq (m_1,\cdots,m_d).$$
Using equation \eqref{pfEq1} we only need to show above only when $m_i\in\{n_i,l_i+1-n_i\}$. 
Let $S=\{i:n_i\neq m_i\}$ then for $(m_1,\cdots,m_d)\neq (n_1,\cdots,n_d)$ observe
\begin{align*}
E_{n_1,\cdots,n_d}-E_{m_1,\cdots,m_d}=4r^2\sum_{i\in S}\cos\frac{\pi n_i}{l_i+1}+O(r).
\end{align*}
Using the lemma \ref{lem5} on the indexing set $S$ we get
$$\sum_{i\in S} \cos\frac{\pi n_i}{l_i+1}\neq 0$$
for any $n_i\in\{1,\cdots,l_i\}$ for all $i$. So \eqref{pfEq2} has simple eigenvalues for $r$ and $\{\lambda_n\}_{\norm{n}_1=1}$ chosen as per lemma \ref{lemMain}. Hence the simplicity of singular spectrum for $H^\omega$.
\subsection{Proof of Corollary \ref{corMainThm}}
Following the argument of the proof of theorem \ref{mainthm} from above, we have to show the bounds of eigenvalue multiplicity for
$$r^2H_r^{\omega,\lambda}=r^2P_0\Delta P_0-r^2P_0\Delta(I-P_0)(\tilde{H}^{\omega,\lambda}-r)^{-1}(I-P_0)\Delta P_0.$$
In the proof of the lemma \ref{lemMain} we have
\begin{align*}
& r^2H_r^{\omega,\lambda}=r^2P_0\Delta P_0+rP_0\Delta(I-P_0)\Delta P_0+P_0\Delta (I-P_0)\Delta(I-P_0)\Delta P_0\\
&\qquad\qquad+\sum_{\norm{n}_1=1}\omega_n P_0\Delta P_n\Delta P_0+\sum_{i=1}^d\lambda_i P_0\Delta P_{e_i}\Delta P_0+O\left(\frac{1}{r}\right).
\end{align*}
and 
$$P_0\Delta(I-P_0)\Delta P_0=\sum_{\norm{n}_1=1}P_0\Delta P_n\Delta P_0$$
where $P_0\Delta P_{e_i}\Delta P_0$ are projection on $\{x\in \Lambda(0): x_i=l_i\}$. Since $l_{s+1}=\cdots=l_d=1$ we have 
\begin{align*}
&A_r^{\omega,\lambda}=r^2 P_0\Delta P_0+rP_0\Delta(I-P_0)\Delta P_0+\sum_{\norm{n}_1=1}\omega_nP_0\Delta P_n\Delta P_0+\sum_{i=1}^d \lambda_i P_0\Delta P_{e_i}\Delta P_0\\
&=\sum_{i=1}^s I^{i-1}\otimes (r^2\Delta_L+(\omega_{-e_i}+ r)\proj{\delta_1}+(\omega_{e_i}+\lambda_{i}+ r)\proj{\delta_l})\otimes I^{d-i}\\
&\qquad+ \left(\sum_{i=s+1}^d \omega_{e_i}+\omega_{-e_i}+\lambda_{i}+ 2r\right)I^{d}.
\end{align*}
So the eigenvalues of $r^2H^{\omega,\lambda}_r$ obtained through following the steps of lemma \ref{lemMain} will be of the form
\begin{align*}
E_{n_1,\cdots,n_s}&=2r^2\sum_{i=1}^s \cos\frac{\pi n_i}{l_i+1}+4r\sum_{i=1}^s \frac{1}{l_i+1}\sin^2\frac{\pi n_i}{l_i+1}\\
&\qquad+2 \sum_{i=1}^s \frac{\omega_{e_i}+\omega_{-e_i}+\lambda_{i}}{l_i+1}\sin^2\frac{\pi n_i}{l_i+1}\\
&\qquad+\sum_{i=s+1}^d (\omega_{e_i}+\omega_{-e_i}+\lambda_{i}+ 2r)+D_{n_1,\cdots,n_s}(\omega,\lambda,r).
\end{align*}
where we have the bound $|D_{n_1,\cdots,n_s}(\omega,\lambda,r)|<20d^3\max_i (l_i+1)^3$. Hence the conclusions follows by imitating the steps of proof for theorem \ref{mainthm}.

\section{Important Results}
The results given in this section are used in second and third sections. Most of the content here are either computational or technical in nature.

Following lemma is used to get \eqref{lemMainEq1} where $x_{i,j}=\frac{1}{l_i+1}\sin^2\frac{\pi j}{l_i+1}$. In lemma \ref{lemMain}, we need to get hold of certain intervals for $\{\omega_{e_i}+\omega_{-e_i}+\lambda_{i}\}_{i=1}^d$ which is given by the range of $a_i$ in the following lemma.
\begin{lemma}\label{lem4}
Let $S_i:=\{x_{i,1},\cdots,x_{i,N_i}\}\subset (0,1)$ be given for $1\leq i\leq d$, define 
$$0<\epsilon=\min_i\{\min_j x_{i,j},\min_{i\neq j}|x_{j,k}-x_{i,k}|\},$$
and $0<\delta<\min\{\frac{1}{2},\frac{1}{1+\epsilon}\}$, then for $\{a_i\}_{i=1}^d$ such that $\frac{1}{2} \left( \frac{2}{\epsilon\delta} \right)^{i}<a_i<\left(\frac{2}{\epsilon\delta}\right)^i$ for all $i$, the sum
$$F_\pi=\sum_{i=1}^d a_i x_{i,\pi(i)}$$
where $\pi\in\prod_{i=1}^d\{1,\cdots,N_i\}$ are unique for each $\pi$. Finally for $\pi\neq\psi$
\begin{equation}\label{lem4eq1}
|F_\pi-F_\psi|>\frac{1}{\delta}.
\end{equation}
\end{lemma}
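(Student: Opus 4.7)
The plan is to exploit the geometric growth of the coefficients. Set $R:=2/(\epsilon\delta)$; the hypotheses $\delta<1/2$ and $\delta<1/(1+\epsilon)$, combined with $\epsilon<1$ (since $\epsilon \leq \min_{i,j} x_{i,j}<1$), force $\epsilon\delta<\epsilon/(1+\epsilon)<1/2$, so $R>4$. Each $a_i$ lives in $(R^i/2, R^i)$, hence the top coefficient dwarfs the sum of all smaller ones. Given $\pi\neq\psi$ in $\prod_{i=1}^d\{1,\dots,N_i\}$, I would identify the largest index of disagreement, extract the dominant term from $F_\pi-F_\psi$, and control the lower-index remainder by a geometric series. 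Uniqueness of $\pi\mapsto F_\pi$ will then drop out of the separation estimate.

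Concretely, fix $\pi\neq\psi$ and let $k=\max\{i:\pi(i)\neq\psi(i)\}$. Since $\pi(i)=\psi(i)$ for $i>k$, those terms cancel and
$$F_\pi-F_\psi = a_k(x_{k,\pi(k)}-x_{k,\psi(k)}) + \sum_{i=1}^{k-1} a_i(x_{i,\pi(i)}-x_{i,\psi(i)}).$$
Interpreting $\epsilon$ as the lower bound it is designed to be on $|x_{i,j_1}-x_{i,j_2}|$ when $j_1\neq j_2$, the dominant term satisfies
$$a_k|x_{k,\pi(k)}-x_{k,\psi(k)}| > \frac{R^k}{2}\,\epsilon = \frac{R^{k-1}}{\delta},$$
using the identity $R\epsilon=2/\delta$. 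For the tail, I would use $|x-y|<1$ on $(0,1)$ together with $a_i<R^i$ to get
$$\sum_{i=1}^{k-1} a_i|x_{i,\pi(i)}-x_{i,\psi(i)}| < \sum_{i=1}^{k-1} R^i < \frac{R}{R-1}R^{k-1} < \frac{4}{3}R^{k-1},$$
where the last bound uses $R>4$. The triangle inequality gives $|F_\pi-F_\psi|>R^{k-1}(1/\delta-4/3)$. For $k=1$ the tail is empty and the bound reduces at once to $a_1\epsilon>1/\delta$; for $k\geq 2$ the estimate $R^{k-1}\geq R>4$ yields $|F_\pi-F_\psi|>4/\delta-16/3>1/\delta$, the last inequality being equivalent to $\delta<9/16$, which is implied by $\delta<1/2$.

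The main obstacle is keeping the constants honest so that the margin between the dominant term and the geometric tail actually beats $1/\delta$ rather than merely $1/\delta-O(1)$; this is precisely where both explicit hypotheses on $\delta$ are used, together with $\epsilon<1$ which gives the clean bound $R>4$. A small secondary point is handling $k=1$ separately, where the geometric-series argument is vacuous and one must invoke the lower endpoint $a_1>R/2$ directly. Uniqueness of the $F_\pi$ is then automatic: if $F_\pi=F_\psi$ for some $\pi\neq\psi$, the estimate above would force $0>1/\delta$, a contradiction.
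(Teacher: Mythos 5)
Your proof is correct and follows essentially the same route as the paper's: isolate the largest index $k$ of disagreement, bound the dominant term $a_k|x_{k,\pi(k)}-x_{k,\psi(k)}|$ from below by $a_k\epsilon$, and control the lower-order terms by a geometric series before comparing with $\frac{1}{\delta}$. Your bookkeeping (bounding $a_i<R^i$ with $R=2/(\epsilon\delta)>4$ and treating $k=1$ separately) is if anything slightly more careful at the final step than the paper's, which bounds the ratios $a_i/a_k$ by $(\epsilon\delta)^{k-i}$ instead, but the argument is the same.
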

\begin{proof}
All we have to show is $F_\pi\neq F_\psi$ if $\pi,\psi\in\prod_{i=1}^d\{1,\cdots,N_i\}$ such that $\pi(i)\neq \psi(i)$ for some $i$.

Let $k=\max\{i:\pi(i)\neq\psi(i)\}$, then we have
\begin{align*}
F_\pi-F_\psi&=\sum_{i=1}^d a_i(x_{i,\pi(i)}-x_{i,\psi(i)})\\
&=a_k(x_{k,\pi(k)}-x_{k,\psi(k)})+\sum_{i=1}^{k-1} a_i(x_{i,\pi(i)}-x_{i,\psi(i)})\\
|F_\pi-F_\psi|&\geq a_k|x_{k,\pi(k)}-x_{k,\psi(k)}|-a_k\sum_{i=1}^{k-1}\frac{a_i}{a_k}|x_{i,\pi(i)}-x_{i,\psi(i)}|\\
&\geq a_k|x_{k,\pi(k)}-x_{k,\psi(k)}|-a_k\sum_{i=1}^{k-1}(\epsilon\delta)^{k-i}\\
&\geq a_k|x_{k,\pi(k)}-x_{k,\psi(k)}|-a_k\frac{\epsilon\delta}{1-\epsilon\delta}\\
&\geq a_k\left(\epsilon-\frac{\epsilon\delta}{1-\epsilon\delta}\right)=a_k\epsilon\frac{1-(1+\epsilon)\delta}{1-\epsilon\delta}>0
\end{align*}
This gives us the estimate:
\begin{equation*}
 |F_\pi-F_\psi|\geq \frac{1-(1+\epsilon)\delta}{1-\epsilon\delta}\frac{1}{\epsilon^{k-1}\delta^k}>\frac{1}{\delta}
\end{equation*}
Completing the proof.

\end{proof}
To prove part (2) of theorem \ref{mainthm}, we needed to show $\sum_{i\in S}\cos\frac{\pi n_i}{l_i+1}\neq 0$ for $S\subset\{1,\cdots,d\}$. This is done in the following by using properties of field extension for roots of unity.
\begin{lemma}\label{lem5}
Let $p_1,\cdots,p_d\in \NN\setminus 2\NN\cup3\NN\cup\{1\}$ be such that $gcd(p_i,p_j)=1$ for all $i\neq j$.
Then
$$\sum_{i=1}^d \cos\frac{\pi n_i}{p_i}\neq 0$$
for any choice of $n_i\in\{1,\cdots,p_i-1\}$ for all $i$.
\end{lemma}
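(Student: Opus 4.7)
The plan is to argue by contradiction using cyclotomic field theory together with Niven's theorem. Suppose $\sum_{i=1}^d \cos(\pi n_i/p_i) = 0$. Writing $\cos(\pi n_i/p_i) = \tfrac{1}{2}(\zeta_{2p_i}^{n_i} + \zeta_{2p_i}^{-n_i})$ with $\zeta_{2p_i}$ a primitive $(2p_i)$-th root of unity, each summand lies in the cyclotomic field $\mathbb{Q}(\zeta_{2p_i})$. Isolating the last index gives
$$\cos(\pi n_d/p_d) = -\sum_{i=1}^{d-1} \cos(\pi n_i/p_i),$$
and the right-hand side sits in the compositum $L := \mathbb{Q}(\zeta_{2p_1},\dots,\zeta_{2p_{d-1}})$.

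Next I would use the identity $L = \mathbb{Q}(\zeta_{\mathrm{lcm}(2p_1,\dots,2p_{d-1})})$. Since the $p_i$ are odd and pairwise coprime, $\mathrm{lcm}(2p_1,\dots,2p_{d-1}) = 2p_1\cdots p_{d-1}$. The classical intersection formula for cyclotomic fields then yields
$$L \cap \mathbb{Q}(\zeta_{2p_d}) = \mathbb{Q}\bigl(\zeta_{\gcd(2p_1\cdots p_{d-1},\,2p_d)}\bigr) = \mathbb{Q}(\zeta_2) = \mathbb{Q},$$
because $\gcd(p_d, p_1\cdots p_{d-1}) = 1$. Since $\cos(\pi n_d/p_d)$ belongs simultaneously to $L$ and to $\mathbb{Q}(\zeta_{2p_d})$, it must be rational. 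To finish I would invoke Niven's theorem: the only rationals in the range of $\cos(r\pi)$ for $r \in \mathbb{Q}$ are $\{0, \pm 1/2, \pm 1\}$. Each possibility is incompatible with the hypotheses: the values $\pm 1$ force $n_d/p_d \in \mathbb{Z}$, impossible for $1 \leq n_d \leq p_d-1$; the value $0$ forces $p_d$ to be even, contradicting $p_d \notin 2\mathbb{N}$; the values $\pm 1/2$ force $3 \mid p_d$, contradicting $p_d \notin 3\mathbb{N}$. This exhausts the cases.

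I do not anticipate a serious obstacle. The hypotheses on the $p_i$ are tailored precisely to make the argument go through in two distinct places: the pairwise coprimality together with oddness forces the cyclotomic intersection to collapse to $\mathbb{Q}$, while the conditions $p_d \notin 2\mathbb{N}\cup 3\mathbb{N}$ together with $1 \leq n_d \leq p_d-1$ forbid each of the five rational values listed by Niven. The only standard facts needed are the intersection formula $\mathbb{Q}(\zeta_m)\cap \mathbb{Q}(\zeta_n) = \mathbb{Q}(\zeta_{\gcd(m,n)})$ and Niven's theorem; the singling out of the $d$-th index is a matter of convenience since the setup is symmetric in $i$.
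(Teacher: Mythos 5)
Your proposal is correct and follows essentially the same route as the paper: isolate one cosine, show via a cyclotomic intersection argument that it would have to be rational, and then rule out the rational values. The only difference is that you invoke the intersection formula $\mathbb{Q}(\zeta_m)\cap\mathbb{Q}(\zeta_n)=\mathbb{Q}(\zeta_{\gcd(m,n)})$ and Niven's theorem as known results, whereas the paper derives both special cases from scratch (degree multiplicativity of the compositum for the intersection, and a minimal-polynomial bound $\phi(2\tilde{p})\leq 2$ in place of Niven).
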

\begin{proof}
The proof uses properties of algebraic extensions over $\mathbb{Q}$. We will use $\mathbb{Q}(\alpha)$ to denote the minimal field extension of $\mathbb{Q}$ which contains $\alpha$. First let us prove the following statement:
\\
\\{\bf Claim:} Let $p,q\in\NN\setminus 2\NN\cup3\NN\cup\{1\}$ with $gcd(p,q)=1$, then $\cos\frac{\pi k}{p}\not\in\mathbb{Q}(e^{\iota\frac{\pi}{q}})$ for any $1\leq k<p$.
\\{\bf Proof:} First notice that $\mathbb{Q}(e^{\iota\frac{\pi}{p}},e^{\iota\frac{\pi}{q}})=\mathbb{Q}(e^{\iota\frac{\pi}{pq}})$. The inclusion $\mathbb{Q}(e^{\iota\frac{\pi}{pq}})\subseteq \mathbb{Q}(e^{\iota\frac{\pi}{p}},e^{\iota\frac{\pi}{q}})$ is always there. For other inclusion notice that any element $a\in \mathbb{Q}(e^{\iota\frac{\pi}{p}},e^{\iota\frac{\pi}{q}})$ is of the form $\sum_{i,j}a_{i,j}e^{\iota\pi\left(\frac{i}{p}+\frac{j}{q}\right)}$ where $a_{i,j}\in\mathbb{Q}$, so $a$ always belongs to $\mathbb{Q}(e^{\iota\frac{\pi}{pq}})$. Hence we have the following field extensions
\begin{align*}
\xymatrix{
& \mathbb{Q}(e^{\iota\frac{\pi}{pq}})\ar@{-}[ld]\ar@{-}[rd]\ar@{-}[dd]_{\phi(pq)}&\\
\mathbb{Q}(e^{\iota\frac{\pi}{p}})\ar@{-}[rd]_{\phi(p)} & & \mathbb{Q}(e^{\iota\frac{\pi}{q}})\ar@{-}[ld]^{\phi(q)}\\
&\mathbb{Q}&
}
\end{align*}
where $\phi$ is the Euler $\phi$-function which has the property $\phi(pq)=\phi(p)\phi(q)$ for $gcd(p,q)=1$. So we have
$$[\mathbb{Q}(e^{\iota\frac{\pi}{pq}}):\mathbb{Q}]=[\mathbb{Q}(e^{\iota\frac{\pi}{q}}):\mathbb{Q}][\mathbb{Q}(e^{\iota\frac{\pi}{p}}):\mathbb{Q}]$$
which implies $\mathbb{Q}(e^{\iota\frac{\pi}{p}})\cap \mathbb{Q}(e^{\iota\frac{\pi}{q}})=\mathbb{Q}$. 
This is because for $\alpha\in\mathbb{Q}(e^{\iota\frac{\pi}{p}})\cap \mathbb{Q}(e^{\iota\frac{\pi}{q}})\setminus\mathbb{Q}$ we have $\alpha=\sum_{i=0}^{\phi(q)-1} b_i e^{\iota\frac{\pi i}{q}}$ where $b_i\in\mathbb{Q}$ for all $i$. 
The polynomial $\tilde{m}(x)=\sum_{i=0}^{\phi(q)-1}b_i x^i-\alpha\in\mathbb{Q}(e^{\iota\frac{\pi}{p}})[x]$ and $e^{\iota\frac{\pi}{q}}$ is a root. 
But above diagram implies $[\mathbb{Q}(e^{\iota\frac{\pi}{pq}}):\mathbb{Q}(e^{\iota\frac{\pi}{p}})]=\phi(q)$, so the degree of the minimal polynomial $m(x)$ of $e^{\iota\frac{\pi}{q}}$ over $\mathbb{Q}(e^{\iota\frac{\pi}{p}})$ is $\phi(q)$.
 Since $m(x)|\tilde{m}(x)$ degree of $m$ has to be less then degree of $\tilde{m}$. Hence the contradiction.

So only way $\cos\frac{\pi k}{p}\in\mathbb{Q}(e^{\iota\frac{\pi}{q}})$ is if $\cos\frac{\pi k}{p}\in\mathbb{Q}$. Let $\frac{\tilde{k}}{\tilde{p}}=\frac{k}{p}$ such that $gcd(\tilde{k},\tilde{p})=1$, so there exists $\tilde{n}$ such that $\tilde{n}\tilde{k}\equiv 1 mod \tilde{p}$. Using this we have
\begin{align*}
\cos\frac{\pi}{\tilde{p}}&=\cos\frac{\pi \tilde{k}\tilde{n}}{\tilde{p}}=\Re\left(e^{\iota\pi\frac{\tilde{k}}{\tilde{p}}}\right)^{\tilde{n}}=\Re\left(\cos\frac{\pi \tilde{k}}{\tilde{p}}+\iota\sin\frac{\pi \tilde{k}}{\tilde{p}}\right)^{\tilde{n}}\\
&=\sum_{r=0}^{\lfloor\frac{\tilde{n}}{2}\rfloor}(-1)^r\matx{\tilde{n}\\ 2r}\left(\sin\frac{\pi \tilde{k}}{\tilde{p}}\right)^{2r}\left(\cos\frac{\pi \tilde{k}}{\tilde{p}}\right)^{\tilde{n}-2r}\\
&=\sum_{r=0}^{\lfloor\frac{\tilde{n}}{2}\rfloor}(-1)^r\matx{\tilde{n}\\ 2r}\left(1-\cos^2\frac{\pi \tilde{k}}{\tilde{p}}\right)^{r}\left(\cos\frac{\pi \tilde{k}}{\tilde{p}}\right)^{\tilde{n}-2r}.
\end{align*}
So if $\cos\frac{\pi k}{p}\in\mathbb{Q}$ then $\cos\frac{\pi}{\tilde{p}}\in\mathbb{Q}$. Hence assume $\cos\frac{\pi}{\tilde{p}}=\alpha\in\mathbb{Q}$, then we have
\begin{align*}
\cos\frac{\pi}{\tilde{p}}=\alpha~\Leftrightarrow~e^{\iota\frac{\pi}{\tilde{p}}}+e^{-\iota\frac{\pi}{\tilde{p}}}=2\alpha~\Leftrightarrow~e^{2\iota\frac{\pi}{\tilde{p}}}-2\alpha e^{\iota\frac{\pi}{\tilde{p}}}+1=0.
\end{align*}
So $e^{\iota\frac{\pi}{\tilde{p}}}$ is root of $x^2-2\alpha x+1\in\mathbb{Q}[x]$. But degree of minimal polynomial of $e^{\iota\frac{\pi}{\tilde{p}}}$ over $\mathbb{Q}$ is given by $\phi(2\tilde{p})$, and so using the properties of minimal polynomial we get $\phi(2\tilde{p})\leq 2$. Only way this can happen is for $\tilde{p}\in\{1,2,3\}$. Since $p\not\in 2\NN\cup3\NN$, none of its factors can be $2$ or $3$, which completes the proof of the claim.
\\
\\
Using $\mathbb{Q}(e^{\iota\frac{\pi}{p}},e^{\iota\frac{\pi}{q}})=\mathbb{Q}(e^{\iota\frac{\pi}{pq}})$ whenever $gcd(p,q)=1$, inductively we have
$$\mathbb{Q}(e^{\iota\frac{\pi}{p_1}},\cdots,e^{\iota\frac{\pi}{p_{d-1}}})=\mathbb{Q}(e^{\iota\frac{\pi}{p_1\cdots p_{d-1}}}).$$
As a consequence of above claim we have 
$$\cos\frac{\pi n_d}{p_d}\not\in \mathbb{Q}(e^{\iota\frac{\pi}{p_1}},\cdots,e^{\iota\frac{\pi}{p_{d-1}}})$$
which completes the proof of the lemma.

\end{proof}
The work done in this manuscript uses the theorem \ref{singSubSpThm}. But for using the theorem we have to show \eqref{condEq2}. 
Following lemma uses the fact that the operator $H^\omega$ has the laplacian in it and that the distribution of the random variable are absolutely continuous. Then perturbation by single projection is used inductively to get the result.
\begin{lemma}\label{lemInvertCond}
For any $n,m\in\ZZ^d$, let $\Hi^\omega_n$ be defined as \eqref{condEq1} and $Q^\omega_n$ be the canonical projection form $\ell^2(\ZZ^d)$ to $\Hi^\omega_n$, then 
$$\mathbb{P}(Q^\omega_n P_m \text{ has same rank as }P_m)=1$$
\end{lemma}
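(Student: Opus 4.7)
The plan is to reduce the statement to a polynomial non-vanishing problem, and then exhibit a single $\omega^\ast$ at which the rank condition holds; the absolute continuity of the joint distribution of $\{\omega_p\}_p$ will then propagate the conclusion to almost every $\omega$. Observe first that $Q_n^\omega P_m$ has full rank $r := \mathrm{rank}(P_m)$ if and only if $(\Hi_n^\omega)^\perp\cap P_m\ell^2(\ZZ^d)=\{0\}$. Since $H^\omega$ is self-adjoint and $\Hi_n^\omega$ is the closed $H^\omega$-invariant subspace generated by $P_n\ell^2(\ZZ^d)$, a vector $\psi\in P_m\ell^2(\ZZ^d)$ lies in $(\Hi_n^\omega)^\perp$ exactly when $P_n(H^\omega)^k\psi=0$ for every $k\geq 0$. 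Because $P_m\ell^2(\ZZ^d)$ is finite-dimensional, this common kernel stabilises after finitely many iterations, so the rank condition reduces to the non-vanishing of some $r\times r$ minor of the block matrix $\left(P_n(H^\omega)^k P_m\right)_{k=0}^{K}$ for some large $K$. Each such minor is a polynomial in the finitely many $\omega_p$'s that appear in $(H^\omega)^{\leq K}$, so by absolute continuity of the joint distribution it either vanishes identically in $\omega$, or vanishes only on a set of Lebesgue (hence $\mathbb{P}$-) measure zero. Hence it suffices to exhibit one $\omega^\ast$ for which $(\Hi_n^{\omega^\ast})^\perp\cap P_m\ell^2(\ZZ^d)=\{0\}$.

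I would construct $\omega^\ast$ so that in fact $\Hi_n^{\omega^\ast}=\ell^2(\ZZ^d)$, which trivially gives $Q_n^{\omega^\ast}=I$ and hence the rank condition. Pick $\{\omega^\ast_p\}_{p\in\ZZ^d}$ to be algebraically independent over $\mathbb{Q}$. By induction on the $\ell^1$ graph-distance $D$ from $\Lambda(n)$, I claim $\delta_x\in\Hi_n^{\omega^\ast}$ for every $x\in\ZZ^d$. The base case $D=0$ is immediate from the definition of $\Hi_n^{\omega^\ast}$. For the inductive step at an $x$ at distance $D+1$, fix a graph neighbour $y$ of $x$ at distance $D$; then
$$H^{\omega^\ast}\delta_y=\omega^\ast_{p(y)}\delta_y+\sum_{i=1}^d\bigl(\delta_{y+e_i}+\delta_{y-e_i}\bigr),$$
and every summand on the right other than $\delta_x$ is either $\delta_y$ itself or a $\delta$ at distance $\leq D$ from $\Lambda(n)$, hence belongs to $\Hi_n^{\omega^\ast}$ by the inductive hypothesis. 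Therefore $\delta_x\in\Hi_n^{\omega^\ast}$ as well.

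The hard part is the \emph{corner} configurations: when several of the $y\pm e_i$ appearing above are themselves at distance $D+1$, one application of $H^{\omega^\ast}$ only places a sum $\sum_j\delta_{x_j}$ in $\Hi_n^{\omega^\ast}$, not each $\delta_{x_j}$ individually. To dismantle such degeneracies, I would apply $H^{\omega^\ast}$ repeatedly to the offending sum: the distinct $x_j$'s lie in distinct blocks $\Lambda(p_j)$, so successive iterations produce linear combinations whose coefficients are polynomials in the distinct $\omega^\ast_{p_j}$'s. A Vandermonde-type argument, exploiting the algebraic independence of $\{\omega^\ast_p\}$, then separates each $\delta_{x_j}$. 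This iterative separation, achieved by varying one coordinate of $\omega^\ast$ at a time, is precisely the inductive "perturbation by single projection" referred to in the statement of the lemma; the absolute continuity assumption on the distribution of $\{\omega_p\}_p$ is exactly what ensures that the generic $\omega^\ast$ needed to avoid all arising algebraic degeneracies exists, closing the argument.
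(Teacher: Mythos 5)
Your reduction to a polynomial non-vanishing statement is sound: $Q_n^\omega P_m$ has full rank iff $(\Hi_n^\omega)^\perp\cap P_m\ell^2(\ZZ^d)=\{0\}$, this is detected by minors of $\left(P_n(H^\omega)^kP_m\right)_{k\leq K}$, which are polynomials in finitely many $\omega_p$, and absolute continuity turns nonvanishing at one $\omega^\ast$ into nonvanishing almost surely. The gap is in the construction of $\omega^\ast$. Your induction on the distance $D$ from $\Lambda(n)$ breaks exactly at the corner configurations you flag, and the repair you sketch does not work: the claim that the offending sites $x_j$ (the neighbours of $y$ lying at distance $D+1$) lie in distinct blocks $\Lambda(p_j)$ is false. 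If $y$ sits in the interior of a block $\Lambda(p)$ diagonally away from $\Lambda(n)$, then $y+e_1$ and $y+e_2$ are both at distance $D+1$ and both in $\Lambda(p)$; the potential is constant on each block, so the two sites see the same $\omega^\ast_p$ and no Vandermonde argument in the potential values can separate $\delta_{x_1}$ from $\delta_{x_2}$ (algebraic independence of the family $\{\omega^\ast_p\}$ is of no help here). Moreover, even for sites in distinct blocks, iterating $H^{\omega^\ast}$ on $\sum_j\delta_{x_j}$ does not act diagonally: it produces new Laplacian terms supported on further sites, so the step ``successive iterations give coefficients polynomial in the $\omega^\ast_{p_j}$'' is circular as stated. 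Note also that your target, $\Hi_n^{\omega^\ast}=\ell^2(\ZZ^d)$, is strictly stronger than the lemma, which only needs $(\Hi_n^\omega)^\perp\cap P_m\ell^2(\ZZ^d)=\{0\}$ for each fixed $m$.

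The paper sidesteps the corner problem with a two-step argument you would need to replicate in some form. Step one: for $m=n+ke_i$ (an axis-aligned shift) the statement holds for \emph{every} $\omega$, by a shortest-path argument: if $\phi\in\ell^2(\Lambda(n+ke_i))$ satisfies $P_n(H^\omega)^l\phi=0$ for all $l$, choose $y_0\in supp(\phi)$ minimizing the distance $a$ to $\Lambda(n)$ and $x_0\in\Lambda(n)$ realizing it; because the two boxes differ only in the $i$-th coordinate, $y_0$ is the \emph{unique} point of $supp(\phi)$ at distance $a$ from $x_0$, so $\dprod{\delta_{x_0}}{(H^\omega)^a\phi}=\dprod{\delta_{x_0}}{\Delta^a\phi}$ is a nonzero multiple of $\phi(y_0)$ --- a contradiction. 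This uniqueness is exactly what fails for diagonal $m$, i.e.\ your corner case. Step two: a transitivity property, namely that $\mathbb{P}[\Omega_{q,p}]=\mathbb{P}[\Omega_{p,r}]=1$ implies $\mathbb{P}[\Omega_{q,r}]=1$, proved by writing $\det\left(P_q(H^\omega+\lambda P_p-z)^{-1}P_r\right)$ via the resolvent identity as a rational function of $\lambda$ whose top coefficient $\det\left(G^0_{q,p}(z)(G^0_{p,p}(z))^{l-1}G^0_{p,r}(z)\right)$ is nonzero, and then using absolute continuity of the single variable $\omega_p$. Chaining axis-aligned steps through transitivity reaches every $m$. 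If you want to keep your ``one good $\omega^\ast$'' framework, you still need an analogue of this second step, or some genuinely new idea, to separate two sites lying in the same block.
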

\begin{proof}
We will use the notation
$$\Omega_{n,m}=\{\omega:Q^\omega_n P_m \text{ has same rank as }P_m\},$$
then the statement of the lemma is:
$$\mathbb{P}(\Omega_{n,m})=1\qquad\forall n,m\in\ZZ^d.$$
First we will show
$$\mathbb{P}(\Omega_{n,n+ke_i})=1\qquad\forall k\in\ZZ~\&~1\leq i\leq d,$$
here $\{e_i\}$ denotes the standard basis of $\ZZ^d$. The proof is through contradiction. 

Let $\omega$ is such that $rank(Q^\omega_n P_{n+ke_i})<rank(P_{n+ke_i})$, so there exists $\phi\in P_{n+ke_i}\ell^2(\ZZ^d)=\ell^2(\Lambda(n+ke_i))$ such that $P_n (H^\omega)^l \phi=0$ for all $l\geq 0$.\\
\begin{minipage}{0.6\textwidth}
Next, set $a=dist(\Lambda(n+ke_i),supp(\phi))$, let $x_0\in\Lambda(n)$ and $y_0\in supp(\phi)$ be such that $dist(x_0,y_0)=\norm{x_0-y_0}_1=a$. Note that except for $i^{th}$ entry, rest of the coordinates of $x_0$ and $y_0$ are same. So there is unique $y_0$ for each $x_0$ such that $dist(x_0,y_0)=a$. 
\end{minipage}
\begin{minipage}{0.35\textwidth}
\centering
\includegraphics[width=2.4in,keepaspectratio]{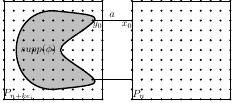}
\end{minipage}
\\But this implies:
$$\dprod{\delta_{x_0}}{(H^\omega)^a \phi}=\dprod{\delta_{x_0}}{(\Delta)^a \phi}=\dprod{\delta_{y_0}}{\phi}\neq 0.$$
Hence we get the contradiction.

Next we show that the property $\mathbb{P}[\Omega_{q,r}]=1$ for $q,r\in\ZZ^d$ is transitive. That is if $\mathbb{P}[\Omega_{q,p}]=1$ and $\mathbb{P}[\Omega_{p,r}]=1$, then
$$\mathbb{P}[\Omega_{q,r}]=1.$$
Using \cite[Lemma 3.1]{AM1} we have $P_q(H^\omega-z)^{-1}P_p$ and $P_p(H^\omega-z)^{-1}P_r$ are invertible for almost all $z\in\CC^{+}$ w.r.t Lebesgue measure. Denote $G^{\lambda}_{s,t}(z)=P_s(H^\omega+\lambda P_p-z)^{-1}P_t$, then using resolvent equation we have
$$G^{\lambda}_{q,r}(z)=G^{0}_{q,r}(z)-\lambda G^{0}_{q,p}(z)(I+\lambda G^{0}_{p,p}(z))^{-1}G^{0}_{p,r}(z).$$
Looking at determinant, we have (using the notation $l=rank(P_p)$)
\begin{align*}
\det(G^{\lambda}_{q,r}(z))&=\det(G^{0}_{q,r}(z)-\lambda G^{0}_{q,p}(z)(I+\lambda G^{0}_{p,p}(z))^{-1}G^{0}_{p,r}(z))\\
&=\frac{\lambda^{l^2}\det(G^{0}_{q,p}(z)(G^{0}_{p,p}(z))^{l-1}G^{0}_{p,r}(z))+\sum_{i=1}^{l^2-1}a_i(z)\lambda^i}{\det(I+\lambda G^{0}_{p,p}(z))}
\end{align*}
Since $G^0_{p,p}(z)$ is invertible for all $z\in\CC^{+}$, using \cite[Corollary 2.3]{AM1} we have $\det(G^{\lambda}_{q,r}(z))\neq 0$ for almost all $z\in\CC^{+}$, for almost all $\lambda$ w.r.t Lebesgue measure. 
So we have invertibility of $P_q(H^\omega+\lambda P_p-z)^{-1}P_r$ for almost all $\lambda$ w.r.t Lebesgue measure, which implies $Q^{\tilde{\omega}^\lambda}_q P_r$ has same rank as $P_r$ (here we use the notation $\tilde{\omega}^\lambda$ to denote the point in sample space $\tilde{\omega}^\lambda_k=\omega_k$ for $k\neq p$ and $\tilde{\omega}^\lambda_p=\omega_p+\lambda$) for almost all $\lambda$ w.r.t Lebesgue measure.
Since $\mathbb{P}[\Omega_{q,p}\cap\Omega_{p,r}]=1$, above argument implies
$$\mathbb{P}[\Omega_{q,r}]=1.$$

Using $\mathbb{P}[\Omega_{p,p+ke_i}]=1$ and transitivity inductively, we have the desired result.

\end{proof}
Following lemma is used to construct the eigenvalues $\{\tilde{E}_{n_1,\cdots,n_d}\}_{1\leq n_i\leq l_i}$ in lemma \ref{lemMain}. The matrix $A^{\omega,\lambda}_r$ in lemma \ref{lemMain} has the structure $\sum_i I^{i-1}\otimes D_r^{a_i,b_i}\otimes I^{d-i}$, and so the eigenvalues obtained here can be directly used. The eigenvalues are computed with the assumption that $r\gg \max\{|a_i|,|b_i|,1\}$. In this lemma we have written the eigenvalues upto $r^{-1}$ order term so that the simplicity of eigenvalues of $A^{\omega,\lambda}_r$ becomes clear. 
\begin{lemma}\label{lemEigAprx}
Let $r\gg\max\{|a|,|b|,1\}$, then the eigenvalues of the matrix 
$$D^{a,b}_r=r^2\Delta_l+(a+r)\proj{\delta_1}+(b+r)\proj{\delta_l}$$
where $\Delta_l$ is discrete laplacian on $\ell^2(1,\cdots,l)$, is given by
\begin{align}\label{lemEigAprxRes1}
\tilde{E}_n^{a,b,r}&=2r^2\cos\frac{\pi n}{l+1}+ \frac{4r}{l+1}\sin^2\frac{\pi n}{l+1}+\frac{2(a+b)}{l+1}\sin^2\frac{\pi n}{l+1}\nonumber\\
&\qquad\qquad+4C_n+\frac{4C_n}{r}(a+b)-\frac{D_n}{r}+O\left(\frac{1}{r^2}\right)
\end{align}
for $1\leq n\leq l$. Here 
$$C_n=\frac{2}{(l+1)^2}\sin^2\frac{\pi n}{l+1}\sum_{\substack{m\neq n\\ m\equiv n mod 2}}\frac{\sin^2\frac{\pi m}{l+1}}{\cos\frac{\pi m}{l+1}-\cos\frac{\pi n}{l+1}}$$
and $|D_n|<16(l+1)^3$.
\end{lemma}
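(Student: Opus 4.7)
The plan is to apply Rayleigh--Schr\"odinger perturbation theory with small parameter $1/r$. Factoring out $r^2$,
\[
\tfrac{1}{r^2}D_r^{a,b} = \Delta_l + \tfrac{1}{r}V_1 + \tfrac{1}{r^2}V_2,\qquad V_1 = \proj{\delta_1}+\proj{\delta_l},\quad V_2 = a\proj{\delta_1}+b\proj{\delta_l}.
\]
The unperturbed operator $\Delta_l$ has simple eigenvalues $\lambda_n = 2\cos\frac{\pi n}{l+1}$ with orthonormal eigenvectors $\psi_n(k) = \sqrt{\tfrac{2}{l+1}}\sin\frac{\pi n k}{l+1}$ for $1\leq n\leq l$, and its minimal spectral gap behaves like $(l+1)^{-2}$. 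For $r$ large compared to $\max\{|a|,|b|,1\}$ and $(l+1)^2$, each $\lambda_n$ therefore extends to a simple analytic eigenvalue branch $\tilde\lambda_n = \lambda_n + c_1/r + c_2/r^2 + c_3/r^3 + O(1/r^4)$; multiplying by $r^2$ yields the expansion \eqref{lemEigAprxRes1}, so the task reduces to identifying $c_1,c_2,c_3$ and bounding the tail.

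The key computational inputs are the matrix element identities $|\psi_n(1)|^2 = |\psi_n(l)|^2 = \tfrac{2}{l+1}\sin^2\tfrac{\pi n}{l+1}$, which follow from $\sin\frac{\pi n l}{l+1}=(-1)^{n+1}\sin\frac{\pi n}{l+1}$, and
\[
\langle\psi_m,V_1\psi_n\rangle = \tfrac{2}{l+1}\sin\tfrac{\pi m}{l+1}\sin\tfrac{\pi n}{l+1}\bigl(1+(-1)^{m+n}\bigr),
\]
which vanishes unless $m\equiv n\pmod 2$; an analogous formula holds for $V_2$ with the constants $1$ and $(-1)^{m+n}$ replaced by $a$ and $b(-1)^{m+n}$. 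Substituting into $c_1 = \langle\psi_n,V_1\psi_n\rangle$ and $c_2 = \langle\psi_n,V_2\psi_n\rangle + \sum_{m\neq n}|\langle\psi_m,V_1\psi_n\rangle|^2/(\lambda_n-\lambda_m)$ recovers the $\frac{4}{l+1}\sin^2\frac{\pi n}{l+1}$ coefficient of $r$ and the $\frac{2(a+b)}{l+1}\sin^2\frac{\pi n}{l+1}+4C_n$ coefficient of $r^0$ in \eqref{lemEigAprxRes1}, once the sum restricted to $m\equiv n\pmod 2$ is recognised as $4C_n$.

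For the $1/r$ correction I would invoke the third-order Rayleigh--Schr\"odinger formula
\[
c_3 = 2\sum_{m\neq n}\frac{V_{1,nm}V_{2,mn}}{\lambda_n-\lambda_m} + \sum_{m,k\neq n}\frac{V_{1,nm}V_{1,mk}V_{1,kn}}{(\lambda_n-\lambda_m)(\lambda_n-\lambda_k)} - V_{1,nn}\sum_{m\neq n}\frac{|V_{1,nm}|^2}{(\lambda_n-\lambda_m)^2}.
\]
The parity selection rule forces $V_{2,mn} = \tfrac{a+b}{2}V_{1,mn}$ on the support of $V_{1,mn}$, so the cross term equals $(a+b)$ times the second-order $V_1$-sum already identified in $c_2$, producing exactly the $\frac{4C_n(a+b)}{r}$ piece of \eqref{lemEigAprxRes1}. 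The remaining two summands depend only on $n$ and $l$ and are absorbed into $-D_n/r$. For the remainder I would represent the perturbed spectral projection by the Cauchy integral
\[
\tilde P_n = -\frac{1}{2\pi\iota}\oint_{\gamma_n}\Bigl(\tfrac{1}{r^2}D_r^{a,b}-z\Bigr)^{-1}dz
\]
over a circle $\gamma_n$ of radius $\tfrac12\min_{m\neq n}|\lambda_m-\lambda_n|\sim(l+1)^{-2}$ centred at $\lambda_n$, expand the resolvent in a Neumann series, and control the tail past third order geometrically; after multiplication by $r^2$ this yields the $O(1/r^2)$ error.

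The main obstacle is the explicit constant $|D_n|<16(l+1)^3$. A crude absolute-value bound on the pure-$V_1$ third-order sum produces a factor $(l+1)^4$ from two energy denominators each of size $(l+1)^{-2}$, which is one power too many. To recover $(l+1)^3$ one must exploit both the $O((l+1)^{-1})$ size of each matrix element $V_{1,nm}$ and the parity selection rule halving the support of the sum, and then separate the contribution of nearest-parity pairs (few terms, smallest denominators) from the bulk (handled by a $|C_{l,n}|\leq 10(l+1)$ type estimate analogous to the one used in the proof of Lemma \ref{lemMain}). This sharp bookkeeping, rather than the perturbation-theoretic set-up, is where the work lies.
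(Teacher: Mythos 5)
Your proposal is correct in outline but takes a genuinely different route from the paper. The paper does not use Rayleigh--Schr\"odinger theory: it first localizes each eigenvalue within $3r$ of $2r^2\cos\frac{\pi n}{l+1}$ by testing $D^{a,b}_r$ against the trial vector $\phi_n(x)=\sqrt{2/(l+1)}\sin\frac{\pi n x}{l+1}$, then exploits the fact that the perturbation is supported on $\mathrm{span}\{\delta_1,\delta_l\}$ to collapse the eigenvalue condition into the single scalar equation $1+(a+b+2r)G_{11}(z)+(a+r)(b+r)\bigl(G_{11}(z)^2-G_{1l}(z)^2\bigr)=0$ for the free Green's function of $r^2\Delta_l$, rewrites $G_{11}^2-G_{1l}^2$ by partial fractions, and solves the resulting fixed-point relation for $E^{a,b,r}_m$ by bootstrapping to order $r^{-1}$. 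Your expansion needs exactly the same computational inputs (the eigenbasis of $\Delta_l$, the boundary values $|\psi_n(1)|^2=|\psi_n(l)|^2=\tfrac{2}{l+1}\sin^2\tfrac{\pi n}{l+1}$, and the parity selection rule), and your coefficients at orders $r$, $r^0$, $r^{-1}$ match the paper's term by term. What the paper's route buys is that the rank-two structure handles all orders at once through one scalar identity, and the trial-vector step guarantees that each branch really is an eigenvalue; your route must separately justify convergence of the perturbation series, which your Riesz-projection/Neumann-series argument does at the cost of requiring $r\gg (l+1)^2$ --- a condition the paper also needs implicitly, and which is harmless since in Lemma \ref{lemMain} the parameter $r$ is taken large depending on the $l_i$.

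Two smaller points. First, with the stated definition of $C_n$ the second-order sum $\sum_{m\neq n}|V_{1,nm}|^2/(\lambda_n-\lambda_m)$ evaluates to $-4C_n$ rather than $+4C_n$, since the denominator in $C_n$ is $\cos\frac{\pi m}{l+1}-\cos\frac{\pi n}{l+1}$ while the Rayleigh--Schr\"odinger denominator is $\lambda_n-\lambda_m=2\bigl(\cos\frac{\pi n}{l+1}-\cos\frac{\pi m}{l+1}\bigr)$; the paper's own passage from $a_m\bigl(c_m-\sum_{n\neq m}\frac{a_n}{E_n-E_m}\bigr)$ to its closed form for $C_m$ contains the same sign slip, and nothing downstream uses more than $|C_{l,n}|\leq 10(l+1)$, so this is cosmetic --- but you should record the sign honestly rather than assert agreement. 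Second, your concern that a crude bound on the pure-$V_1$ third-order terms loses a power of $l+1$ is unfounded: using $|V_{1,nm}|\leq 4/(l+1)$ together with the gap estimate $|\lambda_n-\lambda_m|\geq 4\sin^2\frac{\pi}{2(l+1)}\geq 4/(l+1)^2$, the double sum has at most $(l+1)^2$ terms each of size $O\bigl((l+1)^{-3}\cdot(l+1)^{4}\bigr)$, giving $O\bigl((l+1)^3\bigr)$ by brute force, and the term $V_{1,nn}\sum_{m\neq n}|V_{1,nm}|^2/(\lambda_n-\lambda_m)^2$ obeys the same count. This is precisely the bookkeeping the paper performs to obtain $|D_n|<16(l+1)^3$, so no refined cancellation or separation of nearest-parity pairs is needed.
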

\begin{proof}
Since this matrix is tri-diagonal, any eigenfunction has support on $\{1,l\}$. Set $\phi_n(x)=\sqrt{\frac{2}{l+1}}\sin\frac{\pi n x }{l+1}$ and note that
\begin{align*}
\norm{\left(D^{a,b}_r-2r^2\cos\frac{\pi n}{l+1}\right)\phi_n}_2^2&=\frac{2}{l+1}(a^2+b^2+2r^2+2r(a+b))\sin^2\frac{\pi n}{l+1}.
\end{align*}
Since $\phi_n$ is normalized, it implies that there exists an eigenvalue $E_n^{a,b,r}$ of $D^{a,b}_r$ such that 
\begin{equation}\label{lemEigAprxEq1}
 \left|E_n^{a,b,r}-2r^2\cos\frac{\pi n}{l+1}\right|<\frac{2\sqrt{1+\frac{a^2+b^2+2r(a+b)}{2r^2}}}{\sqrt{l+1}}r\leq 3r
\end{equation}
for large enough $r$ (i.e $\frac{|a|}{r},\frac{|b|}{r}\ll 1$).

Using resolvent equation we have
\begin{align*}
(D^{a,b}_r-z)^{-1}-(r^2\Delta_l-z)^{-1}=-(a+ r)(D^{a,b}_r-z)^{-1}\proj{\delta_1}(r^2\Delta_l-z)^{-1}\\
-(b+ r)(D^{a,b}_r-z)^{-1}\proj{\delta_l}(r^2\Delta_l-z)^{-1}
\end{align*}
Denote $\tilde{G}_{ij}(z)=\dprod{\delta_i}{(D^{a,b}_r-z)^{-1}\delta_j}$ and $G_{ij}(z)=\dprod{\delta_i}{(r^2\Delta_l-z)^{-1}\delta_j}$. Then using above equation we have
\begin{align*}
\hspace{-0.5in}\matx{\tilde{G}_{11}(z) & \tilde{G}_{1l}(z) \\ \tilde{G}_{l1}(z) & \tilde{G}_{ll}(z)}=\matx{ G_{11}(z) & G_{1l}(z) \\ G_{l1}(z) & G_{ll}(z)}\left(\matx{1 & 0 \\ 0 & 1} +\matx{a+ r & 0 \\ 0 & b+ r}\matx{ G_{11}(z) & G_{1l}(z) \\ G_{l1}(z) & G_{ll}(z)}\right)^{-1}
\end{align*}
So the eigenvalues of $D^{a,b}_r$ are given by the roots of 
\begin{align*}
\det\left(\matx{1 & 0 \\ 0 & 1} +\matx{a+ r & 0 \\ 0 & b+ r}\matx{ G_{11}(z) & G_{1l}(z) \\ G_{l1}(z) & G_{ll}(z)}\right)=0\\
\Leftrightarrow\qquad 1+(a+b+2 r)G_{11}(z)+(a+ r)(b+ r)(G_{11}(z)^2-G_{1l}(z)^2)=0.
\end{align*}
For the last equation we use the fact that
$$G_{11}(z)=G_{ll}(z)=\frac{2}{l+1}\sum_{n=1}^l \frac{\sin^2\frac{\pi n}{l+1}}{2r^2\cos\frac{\pi n}{l+1}-z}$$
and 
$$G_{1l}(z)=G_{l1}(z)=\frac{2}{l+1}\sum_{n=1}^l \frac{(-1)^n\sin^2\frac{\pi n}{l+1}}{2r^2\cos\frac{\pi n}{l+1}-z}.$$
We will use $a_n=\frac{2}{l+1}\sin^2\frac{\pi n}{l+1}$ and $E_n=2\cos\frac{\pi n}{l+1}$ to simplify the notations. First observe that
\begin{align*}
G_{11}(z)^2-G_{1l}(z)^2&=4\left(\sum_{\substack{n=1,\cdots,l\\ n\equiv0mod 2}}\frac{a_n}{r^2 E_n-z}\right)\left(\sum_{\substack{n=1,\cdots,l\\ n\equiv1mod 2}}\frac{a_n}{r^2 E_n-z}\right)\\
&=4\sum_{\substack{n,m=1,\cdots,l\\n\not\equiv m mod 2}}\frac{a_na_m}{(r^2 E_n-z)(r^2 E_m-z)}\\
&=\frac{4}{r^2}\sum_{n=1}^l\left(\sum_{m\not\equiv n mod 2} \frac{a_m}{E_m-E_n}\right)\frac{a_n}{r^2 E_n-z}.
\end{align*}
Set $c_n=\sum_{m\not\equiv n mod 2} \frac{a_m}{E_m-E_n}$. Combining all these give us
\begin{align*}
1+(a+b+2 r)\sum_{n=1}^l \frac{a_n}{r^2 E_n-z}+4\frac{(a+ r)(b+ r)}{r^2}\sum_{n=1}^l \frac{a_n c_n}{r^2 E_n-z}=0\\
1+\sum_{n=1}^l \frac{a_n}{r^2 E_n-z}\left(a+b+2 r+4c_n+\frac{4c_n(a+b)}{r}+\frac{4c_n ab}{r^2}\right)=0.
\end{align*}
Since we know $|E^{a,b,r}_{m}-r^2 E_m|<3r$ we have
\begin{align*}
0=1+\frac{a_m}{r^2 E_m-E^{a,b,r}_m}\left(2 r+a+b+4c_m+\frac{4c_m(a+b)}{r}+\frac{4c_m ab}{r^2}\right)\\
+\sum_{n\neq m} \frac{a_n}{r^2 E_n-E^{a,b,r}_m}\left(2 r+a+b+4c_n+\frac{4c_n(a+b)}{r}+\frac{4c_n ab}{r^2}\right)\\
\left[1+2r\frac{a_m}{r^2 E_m-E^{a,b,r}_m}\right]+\left[\frac{a_m(a+b+4c_m)}{r^2 E_m-E^{a,b,r}_m}+2r\sum_{n\neq m}\frac{a_n}{r^2 E_n-E^{a,b,r}_m}\right]\\
+\left[\frac{a_m}{r^2 E_m-E^{a,b,r}_m}\frac{4c_m(a+b)}{r}+\sum_{n\neq m}\frac{a_n(a+b+4c_n)}{r^2 E_n-E^{a,b,r}_m}\right]+O\left(\frac{1}{r^3}\right)=0.
\end{align*}
Here the brackets are arranged as per order terms of $r$. So we have the recurrence relation
\begin{align*}
E^{a,b,r}_m&=r^2 E_m+2ra_m+a_m(a+b+4c_m)+2r\sum_{n\neq m}a_n\frac{r^2 E_m-E^{a,b,r}_m}{r^2 E_n-E^{a,b,r}_m}\\
&+\frac{4a_mc_m(a+b)}{r}+\sum_{n\neq m}a_n(a+b+4c_n)\frac{r^2 E_m-E^{a,b,r}_m}{r^2 E_n-E^{a,b,r}_m}+O\left(\frac{1}{r^2}\right)
\end{align*}
Using $E^{a,b,r}_m=r^2 E_m+2ra_m+a_m\left(a+b+4c_m-4\sum_{n\neq m}\frac{a_n}{E_n-E_m}\right)+O(\frac{1}{r})$ on the above recursion we get
\begin{align*}
\hspace{-1cm}E^{a,b,r}_m&=r^2 E_m+2ra_m+a_m\left(a+b+4c_m-4\sum_{n\neq m}\frac{a_n}{E_n-E_m}\right)\\
&-4a_m\sum_{n\neq m}\frac{a_n}{E_n-E_m}\left(\frac{1}{2r}\left(a+b+4c_m-4\sum_{n\neq m}\frac{a_n}{E_n-E_m}\right)+\frac{2a_m}{r(E_n-E_m)}\right)\\
&+\frac{4a_mc_m(a+b)}{r}-\frac{2a_m}{r}\sum_{n\neq m}\frac{a_n(a+b+4c_n)}{E_n-E_m}+O\left(\frac{1}{r^2}\right).
\end{align*}
So rearranging the terms appropriately give us
\begin{align*}
E^{a,b,r}_m=& r^2 E_m+2ra_m+a_m(a+b)+4a_m\left(c_m-\sum_{n\neq m}\frac{a_n}{E_n-E_m}\right)\\
&+\frac{4a_m(a+b)}{r}\left(c_m-\sum_{n\neq m}\frac{a_n}{E_n-E_m}\right)\\
&-\frac{4a_m}{r}\left(\sum_{n\neq m}\frac{a_n}{E_n-E_m}\left(2c_m+2c_n-2\sum_{n\neq m}\frac{a_n}{E_n-E_m}+\frac{2a_m}{E_n-E_m}\right)\right)\\
&\qquad +O\left(\frac{1}{r^2}\right).
\end{align*}
Notice that \eqref{lemEigAprxRes1} is same as above equation where
\begin{align}\label{lemEigAprxEq2}
C_m&=a_m\left(c_m-\sum_{n\neq m}\frac{a_n}{E_n-E_m}\right)\nonumber\\
&=a_m\sum_{\substack{n\neq m\\ n\equiv m mod 2}}\frac{a_n}{E_n-E_m}\nonumber\\
&=\frac{2}{(l+1)^2}\sin^2\frac{\pi m}{l+1}\sum_{\substack{n\neq m\\ n\equiv m mod 2}}\frac{\sin^2\frac{\pi n}{l+1}}{\cos\frac{\pi n}{l+1}-\cos\frac{\pi m}{l+1}},
\end{align}
and
\begin{align}\label{lemEigAprxEq3}
D_m&=\sum_{n\neq m}\frac{8a_ma_n}{E_n-E_m}\left(c_m+c_n-\sum_{n\neq m}\frac{a_n}{E_n-E_m}+\frac{a_m}{E_n-E_m}\right)\nonumber\\
|D_m|&\leq \frac{128}{l+1}\left(\max_{n\neq m}\frac{1}{|E_n-E_m|}\right)^2\leq \frac{16}{l+1}\frac{1}{\sin^4\frac{\pi}{2(l+1)}}< 16(l+1)^3.
\end{align}
Hence completing the proof of the lemma.

\end{proof}

\appendix
\section{Appendix}
The lemma \ref{multLem1} relies heavily on the expression \eqref{singSpecEq1}. The result is valid for much larger class of Anderson type Hamiltonians. The theorem stated here is part of work done in the thesis \cite{AM2}.
The result is similar to the conclusion of Theorem 1.4 by Jak\v{s}i\'{c}-Last\cite{JL2}, but in this case the condition $\mathbb{P}(\omega: Q^\omega_n P_m\text{ has full rank})=0$ does not imply that the Hilbert subspaces $\Hi^\omega_n$ and $\Hi^\omega_m$ are orthogonal.

First few notations are needed, on the separable Hilbert space $\Hi$ set
\begin{equation}\label{randOp1}
 A^\omega=A+\sum_{n\in\mathcal{N}}\omega_n P_n,
\end{equation}
where $A$ is bounded self-adjoint operator, $\{P_n\}_{n\in\mathcal{N}}$ is a countable collection of rank $N$ projection such that $\sum_{n\in\mathcal{N}}P_n=I$ and $\{\omega_n\}_{n\in\mathcal{N}}$ are independent real random variables following absolutely continuous distribution with bounded support. Set $E^\omega$ to be the spectral projection for the operator $A^\omega$.
Set $\Hi^\omega_n$ to be the minimal closed $A^\omega$-invariant subspace containing the vector space $P_n\Hi$, and $Q^\omega_n$ be the canonical projection from $\Hi$ to $\Hi^\omega_n$. 
\begin{theorem}\cite[Theorem 4.1.1 (4)]{AM2}\label{singSubSpThm}
On the separable Hilbert space $\Hi$ let $A^\omega$ be described by \eqref{randOp1}. Set $E^\omega_{sing}$ to be the orthogonal projection onto the singular part of the spectral measure for the operator $A^\omega$.
If
\begin{equation}\label{condEq3}
 \mathbb{P}[\omega: Q^\omega_n P_m\text{ has same rank as }P_m]=1\qquad\forall n,m\in\mathcal{N},
\end{equation}
then almost surely
$$E^\omega_{sing}\Hi=E^\omega_{sing}\Hi^\omega_{n}$$
for any $n\in\mathcal{N}$.
\end{theorem}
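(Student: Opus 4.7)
The plan is to establish the equivalent formulation that $E^\omega_{sing}(I - Q^\omega_n) = 0$ almost surely. Since $Q^\omega_n$ projects onto an $A^\omega$-invariant subspace it commutes with $E^\omega_{sing}$, and using $\sum_m P_m = I$ the problem reduces to proving that for every pair $m, n \in \mathcal{N}$ and every $\phi \in P_m\Hi$, the scalar spectral measure $\mu^\omega_{(I - Q^\omega_n)\phi}$ is purely absolutely continuous almost surely. The case $m = n$ is trivial since $P_n\Hi \subset \Hi^\omega_n$ forces $(I - Q^\omega_n)\phi = 0$. For $m \neq n$, I would isolate the $\omega_m$-dependence by writing $A^\omega = A^{\omega,-m} + \omega_m P_m$ with $A^{\omega,-m}$ independent of $\omega_m$, and apply the rank-$N$ Krein/resolvent formula
\[
P_n(A^\omega - z)^{-1} P_n = G_{nn}(z) - \omega_m G_{nm}(z)\bigl(I + \omega_m G_{mm}(z)\bigr)^{-1} G_{mn}(z),
\]
where $G_{pq}(z) := P_p(A^{\omega,-m} - z)^{-1} P_q$, together with the analogous identities for $P_m(A^\omega-z)^{-1}P_m$ and the off-diagonal block $P_n(A^\omega-z)^{-1}P_m = G_{nm}(z)\bigl(I + \omega_m G_{mm}(z)\bigr)^{-1}$.

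Next I would apply Poltoratskii's theorem to the matrix-valued Herglotz functions $z \mapsto P_m(A^\omega - z)^{-1} P_m$ and $z \mapsto P_n(A^\omega - z)^{-1} P_n$: as in lemma \ref{multLem1}, on the singular support of the trace measure $\sigma^\omega_m = \mathrm{tr}\,P_m E^\omega(\cdot)P_m$ the normalized boundary value recovers the density $M^\omega_m$ of the matrix-valued spectral measure. Combes-Hislop spectral averaging with respect to $\omega_m$ then allows me to restrict attention to the Lebesgue-full set where all relevant boundary values exist, simultaneously for $A^{\omega,-m}+\lambda P_m$ and almost every $\lambda$. Consequently, the essential support of $\sigma^{\omega}_{m,sing}$ is contained in the set of energies $E$ where $\mathrm{tr}\bigl((I + \omega_m G_{mm}(E+i0))^{-1}\bigr)$ diverges, equivalently where $\ker(I + \omega_m G_{mm}(E+i0)) \neq \{0\}$, and the analogous statement holds for $\sigma^\omega_{n,sing}$.

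The heart of the argument is then to leverage the hypothesis \eqref{condEq3} via lemma \ref{lemInvertCond}, which ensures that $P_n(A^\omega - z)^{-1} P_m$ has full rank $N$ for almost every $z\in\CC\setminus\sigma(A^\omega)$, hence for almost every boundary point $z = E+i0$. Combining this with the identity $P_n(A^\omega - z)^{-1} P_m = G_{nm}(z)\bigl(I + \omega_m G_{mm}(z)\bigr)^{-1}$, one sees that a non-trivial kernel of $I + \omega_m G_{mm}(E+i0)$ forces $(I + \omega_m G_{mm}(E+i0))^{-1}$ to blow up in a direction that is then multiplied by the full-rank operator $G_{nm}(E+i0)$, producing an equally singular divergence in $P_n(A^\omega - E - i\epsilon)^{-1}P_n$. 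Thus the singular support of $\mu^\omega_\phi$ for $\phi\in P_m\Hi$ is contained in the singular support of the matrix spectral measure on $P_n\Hi$, and invoking the decomposition $\mu^\omega_\phi = \mu^\omega_{Q^\omega_n\phi} + \mu^\omega_{(I-Q^\omega_n)\phi}$ valid because $Q^\omega_n$ reduces $A^\omega$, I conclude $\mu^\omega_{(I-Q^\omega_n)\phi, sing} = 0$.

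The main obstacle is the matrix-valued character of the problem. In the rank-one Jak\v{s}i\'c-Last setting, scalar Borel transforms diverge at isolated directions and the transfer of singular support is automatic; here one must ensure that no rank defect develops in the $\epsilon \downarrow 0$ limit of $G_{nm}(E+i\epsilon)$, which is exactly what the full-rank condition \eqref{condEq3} and lemma \ref{lemInvertCond} provide. A secondary technical point is to intersect countably many probability-one events so that the conclusion $E^\omega_{sing}\Hi = E^\omega_{sing}\Hi^\omega_n$ holds on one single full-measure set simultaneously for every $n \in \mathcal{N}$.
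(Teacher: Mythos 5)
There is a genuine gap at the last step of your third paragraph. What your divergence-transfer argument can deliver (via the Krein formula and Cauchy--Schwarz bounds on Herglotz boundary values) is a containment of \emph{supports}: the essential singular support of the trace measure $\sigma^\omega_m$ is contained, up to Lebesgue-null sets, in that of $\sigma^\omega_n$. But the theorem is a statement about \emph{subspaces}, not supports, and the passage from this containment to ``$\mu^\omega_{(I-Q^\omega_n)\phi,sing}=0$'' is a non sequitur. Two orthogonal vectors can generate mutually orthogonal $A^\omega$-invariant subspaces whose spectral measures have identical singular supports --- this is exactly what happens when the singular spectrum is degenerate, e.g.\ a doubly degenerate eigenvalue $E$ with one eigenvector inside $\Hi^\omega_n$ and a second one orthogonal to $\Hi^\omega_n$. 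Since the whole point of this paper is that multiplicity as large as $2^d-d$ can occur, an argument that only tracks \emph{where} the singular parts live, and not how the vectors carrying them are related, cannot exclude a singular component of $\phi$ orthogonal to $\Hi^\omega_n$ sitting on top of the singular spectrum of $\Hi^\omega_n$.

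What is missing is precisely the content of lemma \ref{simLem1}: the measure-theoretic statement must be upgraded to an isometric, vector-level one. The paper does this by applying Poltoratskii's theorem to the full matrix of normalized boundary values, diagonalizing the densities $M^\mu_i(x)$ by measurable unitaries $U^\mu_i(x)$, forming the vectors $\psi^\mu_{ij}=U^\mu_{ij}(H_\mu)^\ast e_{ij}$, invoking the F.\ and M.\ Riesz theorem to see that distinct $\psi^\mu_{ij}$ generate mutually orthogonal singular subspaces, and then proving the identity $f^\mu_{2i}(x)=\sum_{j}|p_{ij}(x)|^2 f^\mu_{1j}(x)$ with $p_{ij}(x)=-\mu\dprod{\psi^\mu_{1j}}{G_{12}(x+\iota 0)\psi^\mu_{2i}}$; this shows that multiplication by $p_{ij}$ is an isometry of $E^\mu_{sing}\Hi^\mu_{\psi^\mu_{2i}}$ into $\Hi^\mu_{1,sing}$, whence the subspace inclusion $\Hi^\mu_{2,sing}\subseteq\Hi^\mu_{1,sing}$ for a.e.\ $\mu$. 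The theorem then follows from the two-parameter perturbation $A^\omega+\mu_1 P_n+\mu_2 P_m$, absolute continuity of the randomness, and $\sum_n P_n=I$. (A secondary slip: in this abstract setting the hypothesis \eqref{condEq3} is converted into a.e.\ invertibility of $G_{nm}(x+\iota 0)$ by \cite[Lemma 3.4]{AM1}; lemma \ref{lemInvertCond} goes in the opposite direction and is specific to the lattice model, so it cannot be quoted here.) Your first two paragraphs assemble essentially the same machinery as the paper, but without this isometry step the proof does not close.
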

\begin{proof}
Set $A^{\omega,\mu_1,\mu_2}=A^\omega+\mu_1 P_n+\mu_2 P_m$, then using \cite[lemma 3.4]{AM1} the condition \eqref{condEq3} implies that the matrix $P_n(A^{\omega,\mu,0}-x-\iota 0)^{-1}P_m$ and $P_n(A^{\omega,0,\mu}-x-\iota 0)^{-1}P_m$ are invertible for almost all $x$ (w.r.t Lebesgue measure) almost surely for any $n,m\in\mathcal{N}$. Using lemma \ref{simLem1} we have
\begin{equation}\label{subspEq1}
 E^{\tilde{\omega}}_{sing}\Hi^{\tilde{\omega}}_{n}\subseteq E^{\tilde{\omega}}_{sing}\Hi^{\tilde{\omega}}_{m}~~\&~~E^{\tilde{\omega}}_{sing}\Hi^{\tilde{\omega}}_{m}\subseteq E^{\tilde{\omega}}_{sing}\Hi^{\tilde{\omega}}_{n},
\end{equation}
almost all $\mu_1,\mu_2$ (w.r.t Lebesgue measure), where $E^{\tilde{\omega}}_{sing}$ is the orthogonal projection onto the singular part of the spectral measure for $A^{\omega,\mu_1,\mu_2}$ and $\Hi^{\tilde{\omega}}_i$ is the minimal closed $A^{\omega,\mu_1,\mu_2}$-invariant subspace containing $P_i\Hi$.

So using condition \eqref{condEq3} and \eqref{subspEq1} gives us
$$E^\omega_{sing}\Hi^\omega_{n}=E^\omega_{sing}\Hi^\omega_{m}\qquad\forall n,m\in\mathcal{N},$$
which implies
$$E^\omega_{sing}\Hi=\cup_{n\in\mathcal{N}} E^\omega_{sing}\Hi^\omega_n=E^\omega_{sing}\Hi^\omega_m$$
for any $m\in\mathcal{N}$.

\end{proof}

\begin{lemma}\cite[lemma 4.3.10]{AM2}\label{simLem1}
On the Hilbert space $\Hi$ we have two rank $N$ projections $P_1,P_2$ and a self adjoint operator $H$. 
Set $H_\mu=H+\mu P_1$, $G_{ij}(z)=P_i(H-z)^{-1}P_j$ and $G_{ij}^\mu(z)=P_i(H_\mu-z)^{-1}P_j$; set 
$$S=\{x\in\RR|\text{Entries of $G_{ij}(x+\iota 0)$ exists and are finite }\forall i,j=1,2\}$$
and
$$S_{12}=\{x\in S| G_{12}(x+\iota 0)\text{ is invertible}\}.$$
Let $E^\mu_{sing}$ denote the orthogonal projection onto the singular part of spectral measure for $H_\mu$ and set $\Hi^\mu_{i,sing}$ denote the closed $E^\mu_{sing}H_\mu$-invariant linear subspace containing $P_i\Hi$. 
If $S_{12}$ has full Lebesgue measure, then $\Hi_{2,sing}^\mu\subseteq\Hi^\mu_{1,sing}$ for almost all $\mu$ (with respect to Lebesgue measure).
\end{lemma}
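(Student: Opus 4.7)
The plan is to combine the matrix-valued Poltoratskii's theorem with the resolvent identity for $H_\mu=H+\mu P_1$, and to clean up the exceptional-$\mu$ and Lebesgue-null issues via the Combes-Hislop spectral averaging result \cite[Corollary 4.2]{CH1}. As a first step I would write out the identities
\begin{equation*}
G^\mu_{22}(z)=G_{22}(z)-\mu G_{21}(z)B^\mu(z)^{-1}G_{12}(z),\qquad G^\mu_{11}(z)=G_{11}(z)B^\mu(z)^{-1},
\end{equation*}
with $B^\mu(z):=I+\mu G_{11}(z)$, and also $G^\mu_{12}(z)=B^\mu(z)^{-1}G_{12}(z)$. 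These express both the $P_1$- and $P_2$-diagonal blocks of the resolvent of $H_\mu$ in terms of the unperturbed data and a single rational factor $B^\mu(z)^{-1}$, localizing the question of the singular spectrum of $H_\mu$ around the zeros of $B^\mu(\cdot+\iota 0)$.

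Define $\mathcal N^\mu:=\{x\in S_{12}: B^\mu(x+\iota 0)\text{ is singular}\}$. For $x\in\mathcal N^\mu$, the divergent contribution of $B^\mu(x+\iota\epsilon)^{-1}$ as $\epsilon\downarrow 0$ lives in $\ker B^\mu(x+\iota 0)$, and because $G_{12}(x+\iota 0)$ is invertible on $S_{12}$, this divergence is inherited by both $G^\mu_{11}$ and $G^\mu_{22}$: the image of the blow-up of $G^\mu_{11}(x+\iota\epsilon)$ is $\ker B^\mu(x+\iota 0)\subseteq P_1\Hi$, while the image of the blow-up of $G^\mu_{22}(x+\iota\epsilon)$ is $G_{21}(x+\iota 0)\ker B^\mu(x+\iota 0)\subseteq P_2\Hi$. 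Applying the matrix version of Poltoratskii's theorem to $G^\mu_{ii}$, exactly as already used in the proof of Lemma \ref{multLem1}, identifies $\Hi^\mu_{i,sing}$ (restricted to spectral content supported in $S_{12}$) with a direct integral over a common singular measure on $\mathcal N^\mu$, whose fibers are the two subspaces just described. Since $G_{21}(x+\iota 0)$ carries the fiber of $\Hi^\mu_{1,sing}$ onto the fiber of $\Hi^\mu_{2,sing}$ and intertwines the multiplication-by-$x$ action of $H_\mu$, the spectral representation places $\Hi^\mu_{2,sing}$ inside $\Hi^\mu_{1,sing}$ on the portion of spectrum in $S_{12}$.

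The last step handles the Lebesgue-null complement $\RR\setminus S_{12}$ together with the qualification "for almost all $\mu$". By Combes-Hislop spectral averaging applied to the rank-$N$ perturbation $\mu P_1$, for every $\phi\in P_1\Hi$ and almost every $\mu\in\RR$, $\sigma^\mu_\phi$ gives zero mass to any Lebesgue-null Borel set, so in particular the trace measure $\sigma^\mu_{11,sing}$ is supported in $S_{12}$ for a.e. $\mu$. For the $P_2$-side the resolvent identity forces any singular divergence of $G^\mu_{22}(x+\iota\epsilon)$ on $S\setminus S_{12}$ to come from $B^\mu(x+\iota 0)^{-1}$ hitting a direction not killed by $G_{12}(x+\iota 0)$; a parallel spectral-averaging argument through the $P_1$-factor (using that $G_{12}$ is the only conduit linking $P_2$ with the divergent matrix data) puts $\sigma^\mu_{22,sing}$ in $S_{12}$ for almost every $\mu$ as well. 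Combined with the previous paragraph this yields $\Hi^\mu_{2,sing}\subseteq\Hi^\mu_{1,sing}$ for almost all $\mu$. I expect the main obstacle to be the matrix-valued Poltoratskii fiber identification: in the rank-one case it reduces to showing a non-vanishing limit of a ratio of scalar Herglotz functions on the common singular support, but in rank $N$ one must track the kernel/range structure of matrix-valued Herglotz limits carefully and verify that the natural map induced by $G_{21}(x+\iota 0)$ really provides a well-defined isometric embedding of $\Hi^\mu_{2,sing}$ inside $\Hi^\mu_{1,sing}$ as subspaces of the ambient Hilbert space $\Hi$.
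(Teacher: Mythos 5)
Your overall route is the same one the paper takes: resolvent identities expressing $G^\mu_{ij}$ through $B^\mu(z)=I+\mu G_{11}(z)$, the matrix-valued Poltoratskii theorem applied to the normalized limits $\lim_{\epsilon\downarrow 0}\,G^\mu_{ii}(x+\iota\epsilon)/tr(G^\mu_{11}(x+\iota\epsilon))=M^\mu_i(x)$, and the observation that on the singular support the $P_2$-fiber is obtained from the $P_1$-fiber by conjugation with $G_{12}(x+\iota 0)$ (the paper's precise version is $M^\mu_2(x)=\mu^2\,G_{12}(x+\iota 0)^\ast M^\mu_1(x)\,G_{12}(x+\iota 0)$ a.e.\ $\sigma^\mu_{1,sing}$). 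However, there is a genuine gap: the passage from this fiberwise relation to the actual subspace inclusion $\Hi^\mu_{2,sing}\subseteq\Hi^\mu_{1,sing}$ inside $\Hi$ is asserted via ``the spectral representation places $\Hi^\mu_{2,sing}$ inside $\Hi^\mu_{1,sing}$,'' and you yourself flag exactly this identification as the main obstacle without resolving it. Knowing that $range(M^\mu_2(x))=G_{12}(x+\iota 0)^\ast\, range(M^\mu_1(x))$ for a.e.\ $x$ only matches supports and multiplicities; it does not by itself produce vectors of $\Hi^\mu_{2,sing}$ lying in $\Hi^\mu_{1,sing}$.

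The paper closes this gap with a concrete construction you would need to supply. It diagonalizes $M^\mu_i(x)=U_i^\mu(x)^\ast\,\mathrm{diag}(f^\mu_{i1},\dots,f^\mu_{iN})\,U_i^\mu(x)$ with measurably chosen unitaries, forms the vectors $\psi^\mu_{ij}=U^\mu_{ij}(H_\mu)^\ast e_{ij}$, and uses the F.\ and M.\ Riesz theorem to show the cyclic subspaces $\Hi^\mu_{\psi^\mu_{ij}}$, $j=1,\dots,N$, are mutually orthogonal. It then computes the Poltoratskii ratio
$$p_{ij}(x)=\lim_{\epsilon\downarrow 0}\frac{\dprod{\psi^\mu_{1j}}{(H_\mu-x-\iota\epsilon)^{-1}\psi^\mu_{2i}}}{\dprod{\psi^\mu_{1j}}{(H_\mu-x-\iota\epsilon)^{-1}\psi^\mu_{1j}}}=-\mu\dprod{\psi^\mu_{1j}}{G_{12}(x+\iota 0)\psi^\mu_{2i}},$$
identifies $p_{ij}(H_\mu)\psi^\mu_{1j}$ as the projection of $\psi^\mu_{2i}$ onto $E^\mu_{sing}\Hi^\mu_{\psi^\mu_{1j}}$, and verifies the key identity $f^\mu_{2i}(x)=\sum_{j=1}^N|p_{ij}(x)|^2 f^\mu_{1j}(x)$, which shows the sum of these projections exhausts $\psi^\mu_{2i}$ (i.e.\ the map is an isometry onto $E^\mu_{sing}\Hi^\mu_{\psi^\mu_{2i}}$), and hence $\Hi^\mu_{2,sing}\subseteq\Hi^\mu_{1,sing}$. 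Two smaller points: your final paragraph's spectral-averaging argument for the $P_2$-side trace measure is not immediate, since the Combes--Hislop result applies to vectors in the range of the perturbation $P_1$, not to $P_2\Hi$ (the paper avoids this by working a.e.\ with respect to $\sigma^\mu_{1,sing}$ throughout); and the ``intertwining'' map should be written as conjugation by $G_{12}(x+\iota 0)^\ast$ acting on $M^\mu_1$, not as $G_{21}(x+\iota 0)$ applied to a kernel of $B^\mu$, since boundary values from above and below need not coincide on the singular set.
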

\begin{proof}
Let $\{e_{ij}\}_{j=1}^N$ be a basis of $P_i\Hi$ for $i=1,2$. In this basis the linear operators $G_{ij}^\mu(z)$ and $G_{ij}(z)$ are matrices. Using Poltoratskii's theorem (the version used here is \cite[Theorem 1.1]{JL3}) for the matrix case we have
$$\lim_{\epsilon\downarrow 0}\frac{1}{tr(G_{11}^\mu(x+\iota\epsilon))}G_{11}^\mu(x+\iota\epsilon)=M^\mu_1(x),$$ 
for almost all $x$ w.r.t. $\sigma_{1,sing}^\mu$ (here $\sigma_i^\mu$ denotes the trace measure $tr(P_i E^{H_\mu}(\cdot) P_i)$ and set $\sigma^\mu_{1,sing}$ to be singular part of the measure). Using non-negativity of the spectral measure we have $M_1^\mu(x)\geq 0$ for almost all $x$ with respect to $\sigma^\mu_{1,sing}$. 
Following the proof of lemma \cite[lemma 3.7]{AM1}, we get
$$\lim_{\epsilon\downarrow 0}\frac{1}{tr(G_{11}^\mu(x+\iota\epsilon))}G_{ii}^\mu(x+\iota\epsilon)=M^\mu_i(x)\geq 0$$ 
for almost all $x$ w.r.t. $\sigma_{1,sing}^\mu$. Let $U_i^\mu(x)$ be the unitary matrix such that $U_i^\mu(x) M_i^\mu(x) U_i^\mu(x)^\ast$ is diagonal with entries $f_{i1}^\mu(x),\cdots,f_{iN}^\mu(x)$ for $x$ in support of $\sigma_{1,sing}^\mu$ (by using Hahn-Hellinger Theorem, one can choose the $U_i^\mu(\cdot)$ to be Borel measurable function). 
For $x$ not in the support of $\sigma_{1,sing}^\mu$ set $U_{ij}^\mu(x)=0$ and define $\psi^\mu_{ij}=U_{ij}^\mu(H_\mu)^\ast e_{ij}$.

We observe that
\begin{align*}
&\dprod{\psi_{ij}^\mu}{(H_\mu-z)^{-1}\psi_{kl}^\mu}=\int \frac{1}{x-z}\dprod{\psi_{ij}^\mu}{E^{H_\mu}(dx)\psi^\mu_{kl}}\\
&\qquad=\int \frac{1}{x-z}\dprod{U_i^\mu(x)^\ast e_{ij}}{E^{H_\mu}(dx)U_k^\mu(x)^\ast e_{kl}}\\
&\qquad=\int \frac{1}{x-z}\sum_{p,q}\dprod{e_{ij}}{U_i^\mu(x) e_{ip}}\dprod{e_{kq}}{U_k^\mu(x)^\ast e_{kl}}\dprod{e_{ip}}{E^{H_\mu}(dx)e_{kq}}\\
&\qquad=\int \frac{1}{x-z}\sum_{p,q}\dprod{e_{ij}}{U_i^\mu(x) e_{ip}}\overline{\dprod{e_{kl}}{U_k^\mu(x) e_{kq}}}\dprod{e_{ip}}{E^{H_\mu}(dx)e_{kq}}.
\end{align*}
So as a consequence of Poltoratskii's theorem
\begin{align*}
&\lim_{\epsilon\downarrow0}\frac{\dprod{\psi_{ij}^\mu}{(H_\mu-x-\iota\epsilon)^{-1}\psi_{kl}^\mu}}{tr(G_{11}^\mu(x+\iota\epsilon))}\\
&\qquad=\sum_{p,q}\dprod{e_{ij}}{U_i^\mu(x) e_{ip}}\overline{\dprod{e_{kl}}{U_k^\mu(x) e_{kq}}}\left(\lim_{\epsilon\downarrow0}\frac{\dprod{e_{ip}}{(H_\mu-x-\iota\epsilon)^{-1}e_{kq}}}{tr(G_{11}^\mu(x+\iota\epsilon))}\right)
\end{align*}
Therefore for $j\neq k$ we have $\dprod{\psi^\mu_{ij}}{(H_\mu-z)^{-1}\psi^\mu_{ik}}=0$, because the normal limit to $\RR$ is zero for all $x$. 
But the measure $\dprod{\psi^\mu_{ij}}{E^{H_\mu}(\cdot)\psi^\mu_{ik}}$ cannot have any absolutely continuous component, because by the construction of $\{\psi^\mu_{pq}\}$, the measure $\dprod{\psi^\mu_{pq}}{E^{H_\mu}(\cdot)\psi^\mu_{pq}}$ is supported on $supp(\sigma^\mu_{1,sing})$ which has zero Lebesgue measure. 
So as a consequence of F. and M. Riesz theorem (the theorem used is \cite[Theorem 2.2]{JL2}) the Hilbert subspace $\Hi^\mu_{\psi^\mu_{ij}}$ is orthogonal to $\Hi^\mu_{\psi^\mu_{ik}}$, where $\Hi^\mu_{\phi}$ denotes the minimal closed $H_\mu$-invariant subspace containing $\phi$.

Using the steps of proof of lemma \cite[lemma 3.7]{AM1} we have
$$M_2^\mu(x)=\lim_{\epsilon\downarrow0}\frac{1}{tr(G_{11}^\mu(x+\iota\epsilon))}G_{22}^\mu(x+\iota\epsilon)=\mu^2 G_{12}(x+\iota 0)^\ast M_1^\mu(x) G_{12}(x+\iota 0)$$
for almost all $x$ w.r.t. $\sigma_{1,sing}^\mu$, hence giving us
$$f_{2i}^\mu(x)=\lambda^2 \sum_{j=1}^N \left|\dprod{\psi_{1j}^\mu}{G_{12}(x+\iota 0)\psi_{2i}^\mu}\right|^2 f_{1j}(x)$$
for a.e $x$ w.r.t $\sigma_{1,sing}^\mu$. This is important because 
\begin{align*}
 \dprod{\psi^\mu_{2i}}{g(H_\mu)\psi^\mu_{2i}}&=\lim_{\epsilon\downarrow0}\int g(x)\dprod{\psi^\mu_{2i}}{(H_\mu-x-\iota\epsilon)^{-1}\psi^\mu_{2i}}dx\qquad\qquad\forall g\in C_c(\RR)\\
&=\int g(x)f_{2i}^\mu(x)d\sigma_{1,sing}^\mu(x)\\
&=\lambda^2 \sum_{i=1}^N \int g(x)\left|\dprod{\psi^\mu_{2i}}{G_{12}(x+\iota 0)\psi^\mu_{2i}}\right|^2 f_{1j}(x)d\sigma_{1,sing}^\mu(x)
\end{align*}
for all $1\leq i\leq N$.
Using the resolvent identity we get
$$G_{12}^\mu(z)-G_{12}(z)=-\mu G_{11}^\mu(z)G_{12}(z)\qquad z\in\CC^{+},$$
which implies
$$\lim_{\epsilon\downarrow 0}\frac{1}{tr(G_{11}^\mu(x+\iota\epsilon))}G_{12}^\mu(x+\iota\epsilon)=-\mu M_1^\mu(x)G_{12}(x+\iota 0),$$
for almost all $x$ w.r.t. $\sigma^\mu_{1,sing}$, we have,
\begin{align*}
&\lim_{\epsilon\downarrow0}\frac{\dprod{\psi_{1j}^\mu}{(H_\mu-x-\iota\epsilon)^{-1}\psi_{2i}^\mu}}{tr(G_{11}^\mu(x+\iota \epsilon))}\\
&\qquad=\sum_{k,l}\dprod{e_{1j}}{U_1^\mu(x) e_{1k}}\overline{\dprod{e_{2i}}{U_2^\mu(x) e_{2l}}}\dprod{e_{1k}}{\left(\lim_{\epsilon\downarrow0}\frac{G_{12}^\mu(x+\iota \epsilon)}{tr(G_{11}^\mu(x+\iota\epsilon))}\right)e_{2l}}\\
&\qquad=-\mu\sum_{k,l}\dprod{e_{1j}}{U_1^\mu(x) e_{1k}}\overline{\dprod{e_{2i}}{U_2^\mu(x) e_{2l}}}\dprod{e_{1k}}{M_1^\mu(x)G_{12}(x+\iota 0)e_{2l}}\\
&\qquad=-\mu\dprod{e_{1j}}{U_1^\mu(x) M_1^\mu(x)G_{12}(x+\iota 0)U_2^\mu(x)e_{2i}}\\
&\qquad=-\mu f_{1j}^\mu(x)\dprod{\psi_{1j}^\mu}{G_{12}(x+\iota 0)\psi_{2i}^\mu}
\end{align*}
for almost all $x$ w.r.t $\sigma^\mu_{1,sing}$. On the support of $f^\mu_{1j}\sigma_{1,sing}^\mu$ set
$$\lim_{\epsilon\downarrow0}\frac{\dprod{\psi_{1j}^\mu}{(H_\mu-x-\iota\epsilon)^{-1}\psi_{2i}^\mu}}{\dprod{\psi_{1j}^\mu}{(H_\mu-x-\iota\epsilon)^{-1}\psi_{1j}^\mu}}=p_{ij}(x).$$
Because of Poltoratskii's theorem, the vector $p_{ij}(H_\mu)\psi^\mu_{1j}$ is the projection of $\psi^\mu_{2i}$ onto $E^\mu_{sing}\Hi^\mu_{\psi^\mu_{1j}}$. Finally for  almost all $x$ w.r.t. $f_{1j}^\mu d\sigma^\mu_{1,sing}$ we have
\begin{align*}
p_{ij}(x)&=\lim_{\epsilon\downarrow0}\frac{\dprod{\psi_{1j}^\mu}{(H_\mu-x-\iota\epsilon)^{-1}\psi_{2i}^\mu}}{\dprod{\psi_{1j}^\mu}{(H_\mu-x-\iota\epsilon)^{-1}\psi_{1j}^\mu}}\\
&=\lim_{\epsilon\downarrow0}\frac{\dprod{\psi_{1j}^\mu}{(H_\mu-x-\iota\epsilon)^{-1}\psi_{2i}^\mu}}{tr(G_{11}^\mu(x+\iota \epsilon))}\frac{tr(G_{11}^\mu(x+\iota \epsilon))}{\dprod{\psi_{1j}^\mu}{(H_\mu-x-\iota\epsilon)^{-1}\psi_{1j}^\mu}}\\
&=-\mu \dprod{\psi_{1j}^\mu}{G_{12}(x+\iota 0)\psi_{2i}^\mu}.
\end{align*}
Giving us
$$f_{2i}^\mu(x)=\sum_{j=1}^N |p_{ij}(x)|^2 f_{1j}^\mu(x)$$
for almost all $x$ w.r.t. $\sigma_{1,sing}^\mu$.
So multiplication by $p_{ij}$ is not only projection but also an isometry from $E^\mu_{sing}\Hi^\mu_{2i}$ to $\Hi^\mu_{1,sing}$. Since this is valid for all $\psi^\mu_{2j}$, we get
$$\Hi^\mu_{2,sing}\subseteq \Hi^\mu_{1,sing}$$
for almost all $\mu$ (with respect to Lebesgue measure).\\
\end{proof}

\bibliographystyle{plain}

\end{document}